\documentclass[11pt]{amsart}

\usepackage[margin=1.1in]{geometry}
\usepackage{amsmath,amssymb,amsthm,mathtools}
\usepackage[T1]{fontenc}
\usepackage[utf8]{inputenc}
\usepackage[english]{babel}
\usepackage{microtype}
\usepackage{xcolor}
\usepackage[colorlinks=true,linkcolor=blue,citecolor=blue,urlcolor=blue]{hyperref}

\newtheorem{theorem}{Theorem}[section]
\newtheorem{proposition}[theorem]{Proposition}
\newtheorem{corollary}[theorem]{Corollary}
\newtheorem{lemma}[theorem]{Lemma}
\newtheorem{question}[theorem]{Question}
\theoremstyle{definition}
\newtheorem{definition}[theorem]{Definition}

\theoremstyle{remark}
\newtheorem{remark}[theorem]{Remark}

\newcommand{\Hh}{\mathbb H}
\newcommand{\R}{\mathbb R}
\newcommand{\Z}{\mathbb Z}
\newcommand{\T}{\mathbb T}
\newcommand{\Sph}{\mathbb S}
\newcommand{\Aut}{\operatorname{Aut}}
\newcommand{\Out}{\operatorname{Out}}

\newcommand{\Fix}{\operatorname{Fix}}

\newcommand{\tr}{\operatorname{tr}}
\newcommand{\Hom}{\operatorname{Hom}}
\newcommand{\SU}{\operatorname{SU}}
\newcommand{\Jac}{\operatorname{Jac}}
\newcommand{\id}{\operatorname{id}}
\newcommand{\ab}{\operatorname{ab}}
\newcommand{\rhoexp}{\operatorname{R}}

\title{Quaternionic Extensions of Hyperbolic Toral Automorphisms}
\dedicatory{To the memory of Jacob Palis (1940--2025)}
\author{Alberto Verjovsky}
\address{Instituto de Matem\'aticas, \\ Unidad  Cuernavaca \\ Universidad Nacional Aut\'onoma de M\'exico\\
Apartado Postal 273, Administraci\'on de Correos No.~3\\
C.P. 62251 Cuernavaca, Morelos, M\'exico\\
\texttt{alberto\@matcuer.unam.mx}}
\email{}
\date{July 30, 2026}

\subjclass[2020]{Primary 37D20, 37C40; Secondary 20E05, 22E40, 57M60}
\keywords{Anosov automorphism, quaternion, Nielsen transformation, free-group automorphism, invariant torus, topological entropy, Haar measure, character variety, eigencurrent, Ruelle--Sullivan current}

\begin{document}
\begin{abstract}
We construct real-analytic diffeomorphisms of
\(S^3\times S^3\cong \SU(2)\times\SU(2)\) that lift hyperbolic toral
automorphisms by evaluating Nielsen automorphisms of the free group
\(F_2\) on \(\SU(2)^2\).  Every matrix in \(\mathrm{GL}(2,\mathbb Z)\)
admits such a lift, and every lift preserves product Haar measure.  For
each maximal torus \(T\subset\SU(2)\), the product \(T\times T\) is
invariant and carries the original toral dynamics; the union of these
tori is exactly the commuting locus.  Simultaneous conjugation turns
toral periodic points into periodic conjugacy two-spheres, which rules
out ambient Anosov hyperbolicity.  The induced action on the
\(\SU(2)\)-character variety is canonical, and on the boundary pillowcase
it is the quotient of the toral automorphism by \(\xi\mapsto-\xi\).
Finally, normalized forward and backward images of the coordinate
three-cycles converge to stable and unstable eigen-currents supported on
the commuting locus.
\end{abstract}

\maketitle

\section{Introduction}

Let
\[
 A=\begin{pmatrix}a&b\\ c&d\end{pmatrix}\in \mathrm{SL}(2,\Z)
\]
be hyperbolic, so that \(|\tr A|>2\).  The induced automorphism of the
real two-torus is one of the basic examples of an Anosov diffeomorphism.
The purpose of this note is to describe a natural noncommutative extension
of this construction to
\[
 \Sph^3\times\Sph^3\cong \SU(2)\times\SU(2),
\]
where \(\Sph^3\) is regarded as the multiplicative group of unit
quaternions.

A natural first formula is
\begin{equation}\label{eq:power-product}
 M_A(q_1,q_2)=\bigl(q_1^a q_2^b,q_1^c q_2^d\bigr).
\end{equation}
On every commuting two-torus this reduces to the usual map induced by
\(A\).  In contrast with the abelian case, however, the assignment
\(A\mapsto M_A\) is not a group homomorphism: generally
\[
 M_A\circ M_B\ne M_{AB}.
\]
Thus a factorization of a matrix into elementary matrices cannot be
reassembled by simply collecting quaternionic exponents.

The correct replacement of \(\Z^2\) is the free group
\(F_2=\langle x,y\rangle\).  An automorphism of \(F_2\) can be evaluated
on any pair of elements of a group, and its inverse word gives the inverse
map.  The classical surjection
\[
 \Aut(F_2)\longrightarrow \mathrm{GL}(2,\Z)
\]
is obtained by abelianization.  Hence every integral unimodular matrix
has real-analytic diffeomorphic lifts to \(\SU(2)^2\).  The lift depends
on noncommutative word data, not merely on the exponent matrix.

The resulting maps preserve Haar volume.  Moreover, for every maximal
torus \(T\subset\SU(2)\), the product \(T\times T\cong\mathbb T^2\) is
invariant, and the restriction to it is the hyperbolic toral automorphism
determined by \(A\).  The union of all these invariant two-tori is exactly
the commuting locus
\[
 \mathcal C=\{(q_1,q_2)\in\SU(2)^2:q_1q_2=q_2q_1\}.
\]
Distinct maximal tori of \(\SU(2)\) intersect in the center
\(\{\pm1\}\); consequently, the corresponding products \(T\times T\)
are disjoint away from the four central pairs \((\pm1,\pm1)\).  The ambient map is
nevertheless not Anosov on the six-manifold: simultaneous conjugation turns a generic toral periodic
point into an entire conjugacy sphere which is fixed pointwise by the
appropriate return map.  Passing to the
\(\SU(2)\)-character variety removes these symmetry directions.  For the
Fibonacci cat map the quotient is an explicit polynomial automorphism of
a character three-ball.  It preserves the Fricke commutator invariant, so
the full quotient is not globally hyperbolic.  Its boundary is the
pillowcase of conjugacy classes of commuting pairs, and the boundary
dynamics is the pillowcase quotient of the classical cat map.

A second form of hyperbolicity appears at the level of homology and de
Rham currents.  Let $\mu_u$ be the unstable eigenvalue of $A$, with $|\mu_u|>1$.
Then suitably normalized forward images of the coordinate three-cycles
converge to an unstable eigen-current $\mathcal T^u$, while normalized
backward images converge to a stable eigen-current $\mathcal T^s$.  They satisfy
\[
 (F_A)_\#\mathcal T^u=\mu_u\mathcal T^u,
 \qquad
 (F_A)_\#\mathcal T^s=\mu_u^{-1}\mathcal T^s.
\]
Thus their rays are invariant, and their magnitudes grow or decay
exponentially at rate $|\mu_u|$.

\section{Unit quaternions and word maps}

Let
\[
 \Hh=\{x_0+x_1\mathbf i+x_2\mathbf j+x_3\mathbf k:x_r\in\R\}
\]
be the quaternion division algebra, with
\(\mathbf i^2=\mathbf j^2=\mathbf k^2=\mathbf i\mathbf j\mathbf k=-1\).
Its unit sphere
\[
 \Sph^3=\{q\in\Hh:|q|=1\}
\]
is a compact Lie group, naturally isomorphic to \(\SU(2)\).

Let \(F_2=\langle x,y\rangle\).  An \emph{elementary Nielsen
transformation} is an automorphism obtained by one of the following
operations on the free basis: interchanging the two generators,
replacing one generator by its inverse, or multiplying one generator on
the left or on the right by the other.  These elementary transformations
generate \(\Aut(F_2)\).  We shall call an arbitrary element of
\(\Aut(F_2)\), equivalently a finite composition of elementary Nielsen
transformations, a \emph{Nielsen automorphism}.

For a reduced word \(w\in F_2\) and a pair
\((q_1,q_2)\in \SU(2)^2\), write \(w(q_1,q_2)\) for the element obtained
by substituting \(q_1\) for \(x\) and \(q_2\) for \(y\).

\begin{definition}
For a Nielsen automorphism \(\Phi\in\Aut(F_2)\), with
\[
 \Phi(x)=w_1(x,y),\qquad \Phi(y)=w_2(x,y),
\]
define the associated quaternionic Nielsen map by
\[
 F_\Phi(q_1,q_2)=\bigl(w_1(q_1,q_2),w_2(q_1,q_2)\bigr).
\]
\end{definition}

Our convention gives
\begin{equation}\label{eq:composition}
 F_\Phi\circ F_\Psi=F_{\Psi\circ\Phi}.
\end{equation}
Indeed, evaluating \(\Phi(x)\) and \(\Phi(y)\) on
\((\Psi(x),\Psi(y))\) is the evaluation of \(\Psi(\Phi(x))\) and
\(\Psi(\Phi(y))\).  One may reverse the convention to obtain a left
rather than a right action; no dynamical statement below depends on this
choice.

\begin{proposition}\label{prop:diffeomorphism}
For every \(\Phi\in\Aut(F_2)\), the map \(F_\Phi\) is a real-analytic
diffeomorphism of \(\SU(2)^2\), and
\[
 F_\Phi^{-1}=F_{\Phi^{-1}}.
\]
\end{proposition}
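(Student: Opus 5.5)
The plan is to derive everything from the composition rule \eqref{eq:composition} together with two elementary facts: regularity of word maps, and the identity automorphism giving the identity map.

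First, regularity. Multiplication \(\SU(2)\times\SU(2)\to\SU(2)\) and inversion \(q\mapsto q^{-1}=\bar q\) are real-analytic; in quaternionic coordinates they are polynomial maps restricted to the real-analytic submanifold \(\Sph^3\subset\Hh\). For any word \(w\in F_2\), the map \((q_1,q_2)\mapsto w(q_1,q_2)\) is a finite composition of projections, inversions and multiplications, so it is real-analytic. Hence \(F_\Phi\), having such word maps as its two components, is a real-analytic map \(\SU(2)^2\to\SU(2)^2\).

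Second, the identity. For \(\Phi=\id\) we have \(w_1=x\) and \(w_2=y\), so \(F_{\id}=\id\). One point needs care: \eqref{eq:composition} treats evaluation as a homomorphism. For fixed \((q_1,q_2)\), substitution \(x\mapsto q_1\), \(y\mapsto q_2\) extends uniquely to a homomorphism \(\mathrm{ev}_{(q_1,q_2)}\colon F_2\to\SU(2)\), by the universal property of the free group. Evaluating on reduced words agrees with this homomorphism, and the result does not depend on whether the word is reduced, since cancellations \(qq^{-1}=1\) hold in \(\SU(2)\). We then have \(F_\Phi(q)=(\mathrm{ev}_q(\Phi x),\mathrm{ev}_q(\Phi y))\). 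The argument behind \eqref{eq:composition} amounts to \(\mathrm{ev}_{F_\Psi(q)}=\mathrm{ev}_q\circ\Psi\), since both are homomorphisms that agree on \(x\) and \(y\). Verifying this cleanly is the only real obstacle, and it is mild.

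Applying \eqref{eq:composition} with \(\Psi=\Phi^{-1}\) gives
\[
F_\Phi\circ F_{\Phi^{-1}}=F_{\Phi^{-1}\circ\Phi}=F_{\id}=\id .
\]
Applying it in the other order gives
\[
F_{\Phi^{-1}}\circ F_\Phi=F_{\Phi\circ\Phi^{-1}}=\id .
\]
Hence \(F_\Phi\) is a bijection with inverse \(F_{\Phi^{-1}}\). That inverse is itself a word map and therefore real-analytic by the first step. So \(F_\Phi\) is a real-analytic diffeomorphism, indeed a real-analytic isomorphism of manifolds, and \(F_\Phi^{-1}=F_{\Phi^{-1}}\).
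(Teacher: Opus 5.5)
Your proof is correct and follows the same route as the paper: word maps are real-analytic because the group operations are, and then the composition rule \eqref{eq:composition} together with \(F_{\id}=\id\) shows that \(F_{\Phi^{-1}}\) is a two-sided real-analytic inverse of \(F_\Phi\). Your justification of \eqref{eq:composition} through the universal property, \(\mathrm{ev}_{F_\Psi(q)}=\mathrm{ev}_q\circ\Psi\), fills in a detail that the paper leaves implicit.
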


\begin{proof}
Word evaluation is real analytic because multiplication and inversion in
a Lie group are real analytic.  Formula \eqref{eq:composition} gives
\[
 F_\Phi\circ F_{\Phi^{-1}}=F_{\id}=\id,
 \qquad
 F_{\Phi^{-1}}\circ F_\Phi=\id.
\]
\end{proof}

\section{Elementary Nielsen transformations and matrix lifts}

In the terminology introduced above, convenient elementary Nielsen
transformations are
\[
 (x,y)\mapsto(xy,y),\qquad
 (x,y)\mapsto(x,xy),
\]
\[
 (x,y)\mapsto(x^{-1},y),\qquad
 (x,y)\mapsto(y,x).
\]
Their evaluations are
\begin{align*}
 N_{12}(q_1,q_2)&=(q_1q_2,q_2),
 &N_{12}^{-1}(u,v)&=(uv^{-1},v),\\
 N_{21}(q_1,q_2)&=(q_1,q_1q_2),
 &N_{21}^{-1}(u,v)&=(u,u^{-1}v),\\
 I_1(q_1,q_2)&=(q_1^{-1},q_2),
 &I_1^{-1}&=I_1,\\
 P(q_1,q_2)&=(q_2,q_1),
 &P^{-1}&=P.
\end{align*}

Abelianization sends \(F_2\) to
\(F_2^{\mathrm{ab}}=F_2/[F_2,F_2]\cong\Z^2\).  With the standard
column convention, in which the \(j\)-th column records the exponent-sum
vector of the image of the \(j\)-th free generator, it induces a
surjective homomorphism
\begin{equation}\label{eq:abmap}
 \ab:\Aut(F_2)\longrightarrow\mathrm{GL}(2,\Z).
\end{equation}
For the dynamics it is more convenient to use the transposed
\emph{row exponent matrix}
\begin{equation}\label{eq:rowmatrix}
 \rhoexp(\Phi):=\ab(\Phi)^t.
\end{equation}
Thus the first row of \(\rhoexp(\Phi)\) contains the exponent sums of
\(\Phi(x)\), and the second row those of \(\Phi(y)\).  Because of the
transpose, \(\rhoexp\) is an anti-homomorphism:
\[
 \rhoexp(\Phi\circ\Psi)=\rhoexp(\Psi)\rhoexp(\Phi).
\]
This convention is exactly the one compatible with the contravariant
composition rule \eqref{eq:composition} and with the action on angular
coordinates below.

The kernel of either \(\ab\) or \(\rhoexp\) is
\[
 IA_2:=\ker(\ab)=\ker(\rhoexp),
\]
the group of \emph{IA-automorphisms}; the letters IA mean ``identity on
abelianization.''  An \emph{inner automorphism} is a conjugation
\(\iota_w(\gamma)=w\gamma w^{-1}\).  A classical theorem special to rank two states that
\[
 IA_2=\operatorname{Inn}(F_2).
\]
See, for example, \cite{Nielsen1917,Nielsen1924,MagnusKarrassSolitar}.
Consequently,
\[
 \Out(F_2):=\Aut(F_2)/\operatorname{Inn}(F_2)
 \cong\mathrm{GL}(2,\Z).
\]

\begin{definition}
A \emph{quaternionic lift} of \(A\in\mathrm{GL}(2,\Z)\) is a map
\(F_\Phi\) for which \(\rhoexp(\Phi)=A\).  Equivalently, the standard
abelianization matrix of \(\Phi\) is \(A^t\).
\end{definition}

\begin{theorem}\label{thm:existence}
Every matrix \(A\in\mathrm{GL}(2,\Z)\) admits a quaternionic lift
\[
 F_\Phi:\Sph^3\times\Sph^3\longrightarrow\Sph^3\times\Sph^3
\]
which is a real-analytic diffeomorphism.  Such a lift can be constructed
by factoring \(A\) into elementary integral matrices and replacing the
factors by the corresponding Nielsen transformations.
\end{theorem}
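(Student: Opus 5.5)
Two ingredients are needed: surjectivity of \(\rhoexp\) onto \(\mathrm{GL}(2,\Z)\), so that a \(\Phi\) with \(\rhoexp(\Phi)=A\) exists, and Proposition~\ref{prop:diffeomorphism}, which then makes \(F_\Phi\) a real-analytic diffeomorphism with inverse \(F_{\Phi^{-1}}\). The second is already available. The work is to make the first explicit through an elementary factorization, with the order of factors matched to the anti-homomorphism property of \(\rhoexp\).

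First I compute the row exponent matrices of the four elementary Nielsen transformations listed above. With \(\nu_{12}:(x,y)\mapsto(xy,y)\), \(\nu_{21}:(x,y)\mapsto(x,xy)\), \(\iota_1:(x,y)\mapsto(x^{-1},y)\) and \(\pi:(x,y)\mapsto(y,x)\), reading exponent sums row by row gives
\[
 \rhoexp(\nu_{12})=\begin{pmatrix}1&1\\0&1\end{pmatrix},\quad
 \rhoexp(\nu_{21})=\begin{pmatrix}1&0\\1&1\end{pmatrix},\quad
 \rhoexp(\iota_1)=\begin{pmatrix}-1&0\\0&1\end{pmatrix},\quad
 \rhoexp(\pi)=\begin{pmatrix}0&1\\1&0\end{pmatrix}.
\]
The inverses \(\nu_{12}^{-1},\nu_{21}^{-1}\) give the inverse transvections, whose evaluations \(N_{12}^{-1},N_{21}^{-1}\) were recorded above.

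Next I invoke the classical fact that \(\mathrm{SL}(2,\Z)\) is generated by the two transvections \(\begin{pmatrix}1&1\\0&1\end{pmatrix}\) and \(\begin{pmatrix}1&0\\1&1\end{pmatrix}\). This follows from the Euclidean algorithm: row operations reduce any matrix of determinant \(1\) to the identity, and \(-I=(U L^{-1}U)^2\) with \(U,L\) the upper and lower transvections. For \(\det A=-1\), the matrix \(\rhoexp(\iota_1)A\) lies in \(\mathrm{SL}(2,\Z)\), and \(\rhoexp(\iota_1)\) is its own inverse. Hence every \(A\) can be written as \(A=E_1E_2\cdots E_n\) with each \(E_i\) among the four matrices above or the inverse transvections. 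Choose Nielsen transformations \(\phi_i\) with \(\rhoexp(\phi_i)=E_i\). Because \(\rhoexp(\Phi\circ\Psi)=\rhoexp(\Psi)\rhoexp(\Phi)\), I set \(\Phi=\phi_n\circ\cdots\circ\phi_1\), which gives \(\rhoexp(\Phi)=E_1\cdots E_n=A\). By \eqref{eq:composition}, \(F_\Phi=F_{\phi_1}\circ\cdots\circ F_{\phi_n}\), a composition of the explicit maps \(N_{12}^{\pm1},N_{21}^{\pm1},I_1,P\). This is exactly the recipe in the statement: replace each factor by the corresponding Nielsen map.

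The only delicate point is bookkeeping. The transpose in \(\rhoexp\) and the contravariance in \eqref{eq:composition} could silently produce a lift of \(A^t\) or \(A^{-1}\) instead of \(A\). I will settle this by tracking the two order reversals above, which cancel, and by checking the formula on a two-factor example such as \(\nu_{12}\) followed by \(\nu_{21}\), where \(\rhoexp(\nu_{12}\circ\nu_{21})=\rhoexp(\nu_{21})\rhoexp(\nu_{12})\) can be verified against the words directly. Real-analyticity and invertibility then come from Proposition~\ref{prop:diffeomorphism}, using the identification \(\Sph^3\cong\SU(2)\).
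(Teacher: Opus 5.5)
Your proposal is correct and follows the paper's argument: it produces \(\Phi\) with \(\rhoexp(\Phi)=A\) and then applies Proposition~\ref{prop:diffeomorphism}. The only difference is that you prove the needed surjectivity explicitly (transvections generate \(\mathrm{SL}(2,\Z)\), and \(\rhoexp(\iota_1)\) handles determinant \(-1\)), whereas the paper cites surjectivity of \(\ab\); your ordering \(\Phi=\phi_n\circ\cdots\circ\phi_1\), \(F_\Phi=F_{\phi_1}\circ\cdots\circ F_{\phi_n}\) is right and recovers \eqref{eq:catmap} as \(N_{12}\circ N_{21}\).
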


\begin{proof}
Surjectivity of \eqref{eq:abmap} gives \(\Phi\in\Aut(F_2)\) with
\(\ab(\Phi)=A^t\), equivalently \(\rhoexp(\Phi)=A\).
Proposition~\ref{prop:diffeomorphism} proves that its evaluation is a
real-analytic diffeomorphism.  Equivalently, one may factor \(A\) into
elementary matrices and use the corresponding Nielsen transformations,
with the order reversed as dictated by the anti-homomorphism
\(\rhoexp\).
\end{proof}

\begin{proposition}[Nonuniqueness of the lift]\label{prop:nonunique}
A matrix \(A\in\mathrm{GL}(2,\Z)\) canonically determines an outer
automorphism class \([\Phi_A]\in\Out(F_2)\), but not a unique element of
\(\Aut(F_2)\).  In particular, different factorizations of \(A\) into
elementary matrices may produce different Nielsen lifts.  If
\(\Phi,\Psi\in\Aut(F_2)\) have the same row exponent matrix, then there exists
\(w\in F_2\) such that
\[
 \Psi=\iota_w\circ\Phi.
\]
After evaluation on \(\SU(2)^2\), the two lifts differ by simultaneous
conjugation with the point-dependent element \(w(q_1,q_2)\).  Hence they
induce the same map on the character variety \(\SU(2)^2/\SU(2)\), although
they need not coincide as diffeomorphisms upstairs.
\end{proposition}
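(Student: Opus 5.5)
The plan is to reduce everything to the identity \(IA_2=\operatorname{Inn}(F_2)\) recalled above, together with the composition rule \eqref{eq:composition}. Since \(\rhoexp\) is an anti-homomorphism onto \(\mathrm{GL}(2,\Z)\) with kernel \(IA_2\), the fibre \(\rhoexp^{-1}(A)\) is a single coset of \(IA_2=\operatorname{Inn}(F_2)\). This gives the canonical outer class \([\Phi_A]\in\Out(F_2)\cong\mathrm{GL}(2,\Z)\). Non-uniqueness in \(\Aut(F_2)\) then only requires exhibiting one nontrivial inner automorphism. For instance, \(\iota_x\) has \(\rhoexp(\iota_x)=I\), but on \(\SU(2)^2\) it is not the identity: it acts as \((q_1,q_2)\mapsto(q_1,q_1q_2q_1^{-1})\), which moves any noncommuting pair. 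The remark about factorizations follows by writing, say, \(\iota_x\) itself as a product of elementary Nielsen moves: \((x,y)\mapsto(x,xy)\) followed by right multiplication of the second generator by \(x^{-1}\). This factors the identity matrix nontrivially at the level of \(\Aut(F_2)\).

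For the key statement, take \(\Phi,\Psi\) with \(\rhoexp(\Phi)=\rhoexp(\Psi)\). Then \(\rhoexp(\Psi\circ\Phi^{-1})=\rhoexp(\Phi^{-1})\rhoexp(\Psi)=\rhoexp(\Phi)^{-1}\rhoexp(\Psi)=I\). Hence \(\Psi\circ\Phi^{-1}\in IA_2=\operatorname{Inn}(F_2)\), so \(\Psi\circ\Phi^{-1}=\iota_w\) for some \(w\), that is, \(\Psi=\iota_w\circ\Phi\).

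Next I evaluate. By \eqref{eq:composition}, \(F_\Psi=F_{\iota_w\circ\Phi}=F_\Phi\circ F_{\iota_w}\). Rather than track the order abstractly, I will compute directly. We have \(\Psi(x)=w\,w_1\,w^{-1}\) and \(\Psi(y)=w\,w_2\,w^{-1}\), where \(w_i=\Phi(x),\Phi(y)\). Substituting \((q_1,q_2)\) gives
\[
F_\Psi(q_1,q_2)=g\,F_\Phi(q_1,q_2)\,g^{-1},\qquad g=w(q_1,q_2),
\]
with conjugation applied simultaneously to both coordinates. This is exactly the claimed point-dependent simultaneous conjugation. Since the conjugating element lies in \(\SU(2)\) at each point, \(F_\Psi(q)\) and \(F_\Phi(q)\) lie in the same \(\SU(2)\)-orbit, so they have the same image in \(\SU(2)^2/\SU(2)\).

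The remaining point, and the one needing care, is that these maps actually descend to the character variety. I must check that \(F_\Phi\) is equivariant under simultaneous conjugation: \(F_\Phi(hq_1h^{-1},hq_2h^{-1})=hF_\Phi(q_1,q_2)h^{-1}\). This holds because word evaluation commutes with the inner automorphism \(c_h\) of \(\SU(2)\), as \(c_h\) is a group homomorphism. So \(F_\Phi\) induces a well-defined map on orbits, and by the previous paragraph \(F_\Psi\) induces the same one. Finally, to show the lifts need not coincide upstairs, I will use the explicit \(\iota_x\) example at a noncommuting pair such as \((\mathbf i,\mathbf j)\), where \(\mathbf i\mathbf j\mathbf i^{-1}=-\mathbf j\ne\mathbf j\). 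I expect no real obstacle beyond keeping the contravariant conventions straight; the deep input is the cited theorem \(IA_2=\operatorname{Inn}(F_2)\).
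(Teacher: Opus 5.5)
Your proposal is correct and follows essentially the same route as the paper: both show that $\Psi\circ\Phi^{-1}$ lies in $IA_2=\operatorname{Inn}(F_2)$, evaluate $\iota_w\circ\Phi$ pointwise to obtain simultaneous conjugation by $w(q_1,q_2)$, and pass to the orbit space. You also supply details the paper's short proof leaves implicit. These are the explicit nontrivial lift $\iota_x$, checked at $(\mathbf i,\mathbf j)$, and the conjugation-equivariance needed for descent, which the paper proves separately in its later sections.
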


\begin{proof}
The equality \(\rhoexp(\Phi)=\rhoexp(\Psi)\) is equivalent to
\(\ab(\Phi)=\ab(\Psi)\), so the automorphism
\(\Psi\Phi^{-1}\) belongs to \(IA_2\).  Since
\(IA_2=\operatorname{Inn}(F_2)\), it equals \(\iota_w\) for some
\(w\in F_2\).  Evaluating this identity on \(\SU(2)^2\), one finds
that the two lifts differ by simultaneous conjugation by the
point-dependent element \(w(q_1,q_2)\).  Consequently, they determine
the same map on the quotient
\[
 (\SU(2)\times\SU(2))/\SU(2),
\]
and hence induce the same transformation of the \(\SU(2)\)-character
variety.
\end{proof}

\section{The power-product map and the noncommutative warning}

Given
\(A=\left(\begin{smallmatrix}a&b\\c&d\end{smallmatrix}\right)\), the
formula \eqref{eq:power-product} is always a real-analytic self-map.  At
the identity it satisfies
\begin{equation}\label{eq:derivativeidentity}
 D M_A(1,1)=A\otimes I_3,
\end{equation}
under the identification
\(T_1\SU(2)\cong\operatorname{Im}\Hh\cong\R^3\).  Thus, when
\(A\in\mathrm{GL}(2,\Z)\), it is a local diffeomorphism near \((1,1)\).
This is only a local statement.

\subsection{An exact example and the remaining classification problem}
\label{subsec:power-product-example}

The calculation at the identity does not by itself decide whether a general
power-product map is globally invertible.  Some nontrivial examples can,
however, be handled exactly.  For instance, let
\[
 A_0=\begin{pmatrix}3&1\\2&1\end{pmatrix},
 \qquad
 M_{A_0}(q_1,q_2)=(q_1^3q_2,q_1^2q_2).
\]
If
\[
 (u,v)=(q_1^3q_2,q_1^2q_2),
\]
then
\[
 uv^{-1}=q_1,
 \qquad
 q_2=(uv^{-1})^{-2}v.
\]
Hence
\[
 M_{A_0}^{-1}(u,v)
 =\bigl(uv^{-1},(uv^{-1})^{-2}v\bigr),
\]
so $M_{A_0}$ is a real-analytic diffeomorphism.  This example also shows
that the power-product class contains maps whose invertibility is not
immediately visible from the elementary Nielsen generators.

\begin{proposition}\label{prop:powerproductcriterion}
If the two words
\[
 x^a y^b,\qquad x^c y^d
\]
form a free basis of \(F_2\), then the power-product map \(M_A\) is a
real-analytic diffeomorphism of \(\SU(2)^2\).
\end{proposition}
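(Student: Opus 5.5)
The plan is to realize \(M_A\) directly as a quaternionic Nielsen map and then invoke Proposition~\ref{prop:diffeomorphism}. The hypothesis says that \(w_1=x^ay^b\) and \(w_2=x^cy^d\) form a free basis of \(F_2\). Since \(F_2\) is free on \(\{x,y\}\), the universal property gives a unique endomorphism \(\Phi\) with
\[
 \Phi(x)=x^ay^b,\qquad \Phi(y)=x^cy^d.
\]
The image of \(\Phi\) contains a free basis, so \(\Phi\) is surjective. Moreover, the assignment \(x^ay^b\mapsto x\), \(x^cy^d\mapsto y\) extends, again by freeness (now of the basis \(\{w_1,w_2\}\)), to an endomorphism \(\Psi\) with \(\Psi\circ\Phi=\id\) on generators. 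Thus \(\Phi\) is injective, and \(\Phi\in\Aut(F_2)\) with \(\Phi^{-1}=\Psi\). Alternatively one may quote that finitely generated free groups are Hopfian, but the explicit inverse is cleaner.

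Next I check that \(F_\Phi=M_A\) on \(\SU(2)^2\). By definition \(F_\Phi(q_1,q_2)=(w_1(q_1,q_2),w_2(q_1,q_2))\). Substituting \(q_1\) for \(x\) and \(q_2\) for \(y\) in \(x^ay^b\) gives \(q_1^aq_2^b\), with negative exponents read as powers of inverses, and similarly for the second word. This is exactly \eqref{eq:power-product}. One small point must be respected: the definition of word evaluation is phrased for reduced words. The word \(x^ay^b\) is already reduced, or becomes reduced after deleting trivial syllables when \(a=0\) or \(b=0\), so this causes no difficulty.

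Proposition~\ref{prop:diffeomorphism} then shows that \(M_A=F_\Phi\) is a real-analytic diffeomorphism with inverse \(F_{\Phi^{-1}}\). As a consistency check, \(\rhoexp(\Phi)=A\), so \(M_A\) is in fact a quaternionic lift of \(A\); this also forces \(A\in\mathrm{GL}(2,\Z)\). The example \(A_0\) fits this pattern: there \(\Phi^{-1}\) is given by \(x\mapsto xy^{-1}\) and \(y\mapsto (xy^{-1})^{-2}y\).

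The only step with real content is the first one: promoting ``\(w_1,w_2\) form a free basis'' to ``\(\Phi\) is an automorphism.'' I expect the universal-property argument above to settle it without further obstacles.
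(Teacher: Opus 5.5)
Your proof is correct and matches the paper's argument: define \(\Phi\) by \(x\mapsto x^ay^b\), \(y\mapsto x^cy^d\), observe that \(M_A=F_\Phi\), and apply Proposition~\ref{prop:diffeomorphism}. Your explicit inverse, built from the universal property of the free basis \(\{x^ay^b,\,x^cy^d\}\), fills in the step the paper asserts without comment, namely that this assignment is an automorphism of \(F_2\).
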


\begin{proof}
Under the hypothesis, the assignment
\(x\mapsto x^a y^b\), \(y\mapsto x^c y^d\) defines an automorphism
\(\Phi\in\Aut(F_2)\).  Then \(M_A=F_\Phi\), and
Proposition~\ref{prop:diffeomorphism} applies.
\end{proof}

The converse at the level of evaluation on \(\SU(2)\) is not addressed by
this argument: a word endomorphism could in principle evaluate bijectively
on this particular compact group without being an automorphism of \(F_2\).
The following remark records the remaining classification problem.

\begin{remark}\label{rem:powerproductclassification}
No general criterion is established here for deciding when the
power-product map
\[
 M_A(q_1,q_2)=(q_1^a q_2^b,q_1^c q_2^d),
 \qquad
 A=\begin{pmatrix}a&b\\c&d\end{pmatrix}\in\mathrm{GL}(2,\Z),
\]
is a global diffeomorphism of \(\SU(2)^2\).  Proposition~\ref{prop:powerproductcriterion}
gives a sufficient condition, namely that the ordered words
\(x^a y^b\) and \(x^c y^d\) form a free basis of \(F_2\), but the argument does not show whether this condition is necessary after
evaluation on $\SU(2)$.
The extension of \(M_A\) to quaternionic variables does not immediately
reduce the problem to the Dieudonn\'e determinant, since its differential
is generally real-linear rather than quaternionic-linear.  The general invertibility classification is left open.
\end{remark}

\section{Invariant maximal tori}

Recall that, in a compact Lie group, a \emph{maximal torus} is a maximal
connected abelian subgroup; equivalently, it is a connected compact
abelian subgroup that is not properly contained in any larger connected
abelian subgroup.  Every maximal torus of \(\SU(2)\) is one-dimensional.
More concretely, if \(I\) is a unit purely imaginary quaternion, then
\[
 S_I^1=\{e^{I\theta}:\theta\in\R\}\subset\SU(2)
\]
is a maximal torus, and every maximal torus of \(\SU(2)\) is of this
form.  We put
\[
 \T_I^2=S_I^1\times S_I^1\subset\SU(2)^2.
\]
Thus \(\T_I^2\) consists of pairs whose two components lie in the same
maximal torus \(S_I^1\) of \(\SU(2)\).  This is what will be meant below
by saying that the two components lie in a \emph{common maximal torus}.
Notice that \(\T_I^2\) is the product of that common maximal torus with
itself.

Two elements \(q_1,q_2\in\SU(2)\) are said to form a \emph{commuting
pair} if
\[
 q_1q_2=q_2q_1.
\]
Equivalently, the subgroup generated by \(q_1\) and \(q_2\) is abelian.
For \(\SU(2)\), this is also equivalent to the existence of a maximal
torus \(S_I^1\subset\SU(2)\) containing both elements.  Such a maximal
torus is unique unless both elements are central.

\begin{theorem}\label{thm:torusrestriction}
Let \(F_\Phi\) be a quaternionic lift of
\(A=\left(\begin{smallmatrix}a&b\\c&d\end{smallmatrix}\right)\).
Then every \(\T_I^2\) is invariant and, in angular coordinates,
\[
 F_\Phi|_{\T_I^2}(\theta_1,\theta_2)
 =A\begin{pmatrix}\theta_1\\\theta_2\end{pmatrix}
 \pmod{2\pi\Z^2}.
\]
\end{theorem}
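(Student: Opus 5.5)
The plan is to use that \(S_I^1\) is an abelian subgroup of \(\SU(2)\), so evaluating a word on a pair from \(\T_I^2\) only sees the exponent sums of that word. Write \(\Phi(x)=w_1\) and \(\Phi(y)=w_2\). Take \((q_1,q_2)=(e^{I\theta_1},e^{I\theta_2})\in\T_I^2\). Each \(w_k(q_1,q_2)\) is a finite product of the factors \(q_1^{\pm1}\) and \(q_2^{\pm1}\). All of these lie in the subgroup \(S_I^1\), which is closed under multiplication and inversion. Hence \(w_k(q_1,q_2)\in S_I^1\) for \(k=1,2\), and so \(F_\Phi(\T_I^2)\subset\T_I^2\).

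The next step computes the angles. Since \(S_I^1\) is abelian, the factors in \(w_k(q_1,q_2)\) can be reordered freely. The product therefore equals \(q_1^{m_{k1}}q_2^{m_{k2}}\), where \(m_{k1}\) and \(m_{k2}\) are the exponent sums of \(x\) and \(y\) in \(w_k\). The map \(\theta\mapsto e^{I\theta}\) is a homomorphism \(\R\to S_I^1\) with kernel \(2\pi\Z\), because \(I^2=-1\) makes \(e^{I\theta}=\cos\theta+I\sin\theta\). Using this,
\[
 w_k(q_1,q_2)=e^{I(m_{k1}\theta_1+m_{k2}\theta_2)} .
\]
By the definition \eqref{eq:rowmatrix} of the row exponent matrix, \((m_{k1},m_{k2})\) is exactly the \(k\)-th row of \(\rhoexp(\Phi)\), which equals \(A\) by hypothesis. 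So the first row is \((a,b)\) and the second is \((c,d)\). The new angular coordinates are therefore \(A(\theta_1,\theta_2)^t\), taken modulo \(2\pi\Z^2\).

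Finally I would note that invariance holds with equality, not just inclusion. Proposition~\ref{prop:diffeomorphism} gives \(F_\Phi^{-1}=F_{\Phi^{-1}}\), and \(\Phi^{-1}\) is also a word map, so the same argument shows \(F_\Phi^{-1}(\T_I^2)\subset\T_I^2\). Alternatively, \(A\in\mathrm{GL}(2,\Z)\) acts bijectively on \(\R^2/2\pi\Z^2\), which gives surjectivity directly.

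The main obstacle is a bookkeeping point rather than a real difficulty: one must check that the transpose convention in \(\rhoexp\) puts the exponent sums of \(\Phi(x)\) in the first \emph{row}. Only with that convention does the formula come out as \(A\) rather than \(A^t\).
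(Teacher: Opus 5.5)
Your proof is correct and follows the paper's argument: since $S_I^1$ is abelian, each coordinate word reduces to its exponent sums, and these are the rows of $\rhoexp(\Phi)=A$. Your extra remark that $F_{\Phi^{-1}}=F_\Phi^{-1}$ also preserves $\T_I^2$, so that $F_\Phi(\T_I^2)=\T_I^2$, usefully makes explicit the equality in the invariance claim, which the paper leaves implicit.
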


\begin{proof}
Elements of \(S_I^1\) commute.  Therefore word evaluation on
\(S_I^1\) depends only on total exponent sums.  Since these exponent sums are precisely the rows of
\(\rhoexp(\Phi)=A\), the two word coordinates reduce to
\[
 e^{I(a\theta_1+b\theta_2)},\qquad
 e^{I(c\theta_1+d\theta_2)}.
\]
\end{proof}

The union of the subgroups \(\T_I^2=S_I^1\times S_I^1\), obtained by
using the same maximal torus in both factors, is the commuting locus
\begin{equation}\label{eq:commutinglocus}
 \mathcal C=\{(q_1,q_2)\in\SU(2)^2:q_1q_2=q_2q_1\}.
\end{equation}
It is a closed, invariant, four-dimensional subset of the six-dimensional
ambient manifold.  Its only singular points are the four central pairs
\((\varepsilon_1,\varepsilon_2)\), with \(\varepsilon_i\in\{\pm1\}\).

\subsection{The pillowcase fibration of the commuting locus}
\label{subsec:pillowcase-fibration}
Every commuting pair can be written as
\[
 (q_1,q_2)=\bigl(e^{u\theta},e^{u\varphi}\bigr),
 \qquad u\in\Sph^2\subset\operatorname{Im}\mathbb H,
 \quad (\theta,\varphi)\in\T^2.
\]
This representation is subject to the Weyl identification
\begin{equation}\label{eq:weyl-identification}
 (u,\theta,\varphi)\sim(-u,-\theta,-\varphi).
\end{equation}
Let
\[
 \mathcal P:=\T^2/\{\xi\sim-\xi\}.
\]
The quotient \(\mathcal P\) is called the \emph{pillowcase}.  Its
underlying topological space is a two-sphere, while as an orbifold it has
four order-two points, namely the images of
\[
 \T^2[2]=\{(0,0),(\pi,0),(0,\pi),(\pi,\pi)\}.
\]
These four points will be denoted by \(p_1,p_2,p_3,p_4\).

There is a natural projection
\begin{equation}\label{eq:pillowcase-projection}
 \pi_{\mathcal C}:\mathcal C\longrightarrow\mathcal P,
 \qquad
 \pi_{\mathcal C}\bigl(e^{u\theta},e^{u\varphi}\bigr)
   =[\theta,\varphi].
\end{equation}
It is well defined precisely because of
\eqref{eq:weyl-identification}.  If \([\theta,\varphi]\) is not one of
the four orbifold points, then the fibre is the simultaneous-conjugacy
orbit
\[
 \pi_{\mathcal C}^{-1}([\theta,\varphi])
 =\left\{\bigl(e^{v\theta},e^{v\varphi}\bigr):v\in\Sph^2\right\}
 \cong\Sph^2.
\]
Over the four orbifold points the apparent sphere fibre collapses to one
of the four central pairs \((\pm1,\pm1)\).

\begin{proposition}[The punctured-pillowcase fibration]\label{prop:pillowcase-fibration}
Let
\[
 \T^2_*:=\T^2\setminus\T^2[2],
 \qquad
 \mathcal P^{\circ}:=\mathcal P\setminus\{p_1,p_2,p_3,p_4\}.
\]
Since the involution \(\xi\mapsto-\xi\) has precisely the four fixed
points \(\T^2[2]\), it acts freely on \(\T^2_*\).  Consequently,
\begin{equation}\label{eq:punctured-pillowcase}
 \mathcal P^{\circ}=\T^2_*/\{\xi\sim-\xi\}
 \cong S^2\setminus\{p_1,p_2,p_3,p_4\},
\end{equation}
and the quotient map
\[
 \rho:\T^2_*\longrightarrow\mathcal P^{\circ}
\]
is an honest two-sheeted covering.  If
\[
 \mathcal C_{\mathrm{reg}}
 :=\mathcal C\setminus\{(\pm1,\pm1)\},
\]
then
\begin{equation}\label{eq:regular-commuting-quotient}
 \mathcal C_{\mathrm{reg}}
 \cong
 \frac{S^2\times\T^2_*}{(u,\xi)\sim(-u,-\xi)},
\end{equation}
and the map
\begin{equation}\label{eq:regular-pillow-bundle}
 \pi_{\mathcal C}:\mathcal C_{\mathrm{reg}}
 \longrightarrow\mathcal P^{\circ},
 \qquad [u,\xi]\longmapsto[\xi],
\end{equation}
is an \(S^2\)-bundle.  Its pullback by \(\rho\) is the trivial bundle
\[
 \rho^*\mathcal C_{\mathrm{reg}}\cong S^2\times\T^2_*.
\]
The full commuting locus is obtained by restoring the four omitted
pillowcase points and collapsing the sphere over each of them to the
corresponding central pair.
\end{proposition}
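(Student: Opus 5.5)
The plan is to work with the parametrization map
\[
 E:S^2\times\T^2\longrightarrow\mathcal C,\qquad E(u,\theta,\varphi)=\bigl(e^{u\theta},e^{u\varphi}\bigr),
\]
and show that over \(\T^2_*\) it is a two-to-one local diffeomorphism whose fibres are exactly the Weyl orbits.

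\emph{Step 1: the pillowcase.} The freeness of \(\xi\mapsto-\xi\) on \(\T^2_*\) is immediate. The fixed-point equation \(2\xi\equiv0\) has exactly the solutions \(\T^2[2]\). Since a free action of a finite group on a manifold has a covering map as quotient, \(\rho\) is a two-sheeted covering. The identification \(\mathcal P\cong S^2\) is the standard pillowcase picture, obtained by folding the fundamental square \([0,\pi]\times[0,2\pi]\). Removing the four cone points then gives \eqref{eq:punctured-pillowcase}.

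\emph{Step 2: \(E\) is surjective onto \(\mathcal C_{\mathrm{reg}}\).} Restrict \(E\) to \(S^2\times\T^2_*\). The image avoids \((\pm1,\pm1)\), because \(e^{u\theta}=\pm1\) forces \(\theta\in\{0,\pi\}\). Surjectivity onto \(\mathcal C_{\mathrm{reg}}\) follows from the fact recalled before Theorem~\ref{thm:torusrestriction}: a commuting pair lies in a common maximal torus \(S^1_u\).

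\emph{Step 3: the fibres of \(E\) are exactly the Weyl orbits (main obstacle).} Suppose \(E(u,\xi)=E(u',\xi')\) with \(\xi\notin\T^2[2]\). Then some coordinate, say \(q_1=e^{u\theta}\), is noncentral. A noncentral element lies in a unique maximal torus, so \(S^1_u=S^1_{u'}\) and hence \(u'=\pm u\). If \(u'=u\), then \(\xi'=\xi\), since \(\theta\mapsto e^{u\theta}\) is injective mod \(2\pi\). If \(u'=-u\), then \(e^{-u\theta'}=e^{u\theta}\) gives \(\theta'=-\theta\), and likewise \(\varphi'=-\varphi\).

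The delicate subcase is when \(q_1\) is central but \(q_2\) is not, for instance \(\xi=(\pi,\varphi)\). Here the uniqueness of the torus must come from the \emph{other} coordinate. One must also check that \(\theta=\pi\) is consistent under sign change, which holds because \(-\pi\equiv\pi\). This case analysis is where care is needed. Once it is done, \(E\) descends to a continuous bijection
\[
 \bar E:(S^2\times\T^2_*)/\!\sim\;\longrightarrow\;\mathcal C_{\mathrm{reg}}.
\]

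\emph{Step 4: \(\bar E\) is a homeomorphism.} I would argue via properness. Consider \(E\) on all of \(S^2\times\T^2\), which is compact. The preimage of \(\mathcal C_{\mathrm{reg}}\) is exactly \(S^2\times\T^2_*\), since pairs with \(\xi\in\T^2[2]\) map to central pairs. So the restriction of \(E\) to this preimage is proper, and a proper continuous map to a Hausdorff space is closed. A closed continuous bijection is a homeomorphism, which gives \eqref{eq:regular-commuting-quotient}. If a smooth statement is wanted, I would also check that \(dE\) is injective at regular points. The \(u\)-direction moves \(q_1\) or \(q_2\) by conjugation, and this is nonzero precisely because one coordinate is noncentral; the \(\xi\)-directions are tangent to the torus.

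\emph{Step 5: the bundle structure.} The involution \((u,\xi)\mapsto(-u,-\xi)\) is free on \(S^2\times\T^2_*\) and covers the free involution on \(\T^2_*\). Therefore \(\mathcal C_{\mathrm{reg}}\) is the \(S^2\)-bundle associated to the principal \(\Z/2\)-covering \(\rho\), with \(\Z/2\) acting on \(S^2\) by the antipodal map. Local triviality follows from that of \(\rho\).

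The pullback statement is the tautology
\[
 \T^2_*\times_{\mathcal P^\circ}\bigl(S^2\times\T^2_*\bigr)/\Z_2\;\cong\;S^2\times\T^2_*.
\]
It holds because the associated-bundle construction trivializes on the total space of the principal bundle.

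\emph{Step 6: the central pairs.} The final sentence of the statement follows from the fact already noted: over \(\T^2[2]\), every \(u\) gives the same central pair. The whole \(S^2\) fibre over each orbifold point therefore collapses to a single point.
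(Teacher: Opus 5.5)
Your proposal is correct and follows essentially the same route as the paper: parametrize by \((u,\theta,\varphi)\mapsto(e^{u\theta},e^{u\varphi})\), which the paper calls \(\Psi\), and get surjectivity from the common maximal torus. A noncentral coordinate then fixes the axis up to sign, so the only ambiguity is the Weyl involution, and local sections of the double cover \(\rho\) give local triviality and the trivial pullback. Your properness argument, showing that the induced continuous bijection is a homeomorphism, makes explicit a point the paper passes over. Your ``delicate subcase'' needs no separate treatment: once \(u'=\pm u\) is fixed by whichever coordinate is noncentral, injectivity of \(\theta\mapsto e^{u\theta}\) on \(\R/2\pi\mathbb Z\) determines both angles at once.
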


\begin{proof}
We spell out the quotient argument, since it is the source of the
punctured pillowcase and of the twisting.  Define
\[
 \Psi:S^2\times\T^2\longrightarrow\mathcal C,
 \qquad
 \Psi(u,\theta,\varphi)
   =\bigl(e^{u\theta},e^{u\varphi}\bigr).
\]
The two components of every commuting pair lie in a common maximal
torus \(S_I^1\subset\SU(2)\), so \(\Psi\) is
surjective.  If \((\theta,\varphi)\notin\T^2[2]\), at least one component
is noncentral and therefore determines the common imaginary axis up to
sign.  It follows that, on \(S^2\times\T^2_*\), the only ambiguity is
\[
 (u,\xi)\sim(-u,-\xi).
\]
This proves \eqref{eq:regular-commuting-quotient}.

Now the involution \(\xi\mapsto-\xi\) is free on \(\T^2_*\), so its
quotient is a genuine surface.  The quotient of the whole torus is the
pillowcase, whose underlying topological space is \(S^2\); deleting its
four branch points is exactly the same as deleting \(\T^2[2]\) before
taking the quotient.  This proves \eqref{eq:punctured-pillowcase}.

For a point \([\xi]\in\mathcal P^{\circ}\), choose either lift
\(\xi\in\T^2_*\).  Every class over \([\xi]\) has a unique representative
of the form \([u,\xi]\), and changing the chosen lift from \(\xi\) to
\(-\xi\) changes the fibre coordinate from \(u\) to \(-u\).  Thus the
fibre is a copy of \(S^2\), and local sections of the two-sheeted covering
\(\rho\) give local trivializations of \(\pi_{\mathcal C}\).  Pulling the
bundle back to \(\T^2_*\) removes the sign ambiguity and gives the product
\(S^2\times\T^2_*\).

Finally, over a point of \(\T^2[2]\), both exponentials are central and
independent of \(u\).  Hence the entire apparent \(S^2\)-fibre maps to one
central pair.  This is precisely the stated collapse of the four
exceptional fibres.
\end{proof}

\begin{center}
\fbox{\parbox{0.88\textwidth}{
\textbf{The punctured-pillowcase picture.}
The regular base is not the torus itself.  It is the quotient of the
four-punctured torus by the free involution \(\xi\mapsto-\xi\):
\[
 \T^2_*\xrightarrow{\;2:1\;}\mathcal P^{\circ}
 \cong S^2\setminus\{p_1,p_2,p_3,p_4\}.
\]
Above the double cover the sphere family is the product
\(S^2\times\T^2_*\).  Descending to the punctured pillowcase identifies
\((u,\xi)\) with \((-u,-\xi)\), producing the twisted sphere bundle.
}}
\end{center}

Thus the geometric picture is
\[
 \Sph^2\ \longrightarrow\ \mathcal C_{\mathrm{reg}}
 \ \xrightarrow{\ \pi_{\mathcal C}\ }\
 \mathcal P^{\circ}
 \cong\Sph^2\setminus\{p_1,p_2,p_3,p_4\}.
\]
with nontrivial monodromy around each puncture.  The following local
calculation determines this twisting exactly.

\begin{proposition}\label{prop:commutinglocaltopology}
Around each puncture of the regular pillowcase, the monodromy of the
conjugacy-sphere bundle is the antipodal map
\[
 u\longmapsto -u
\]
of \(S^2\).  Hence the monodromy reverses the orientation of the fibre.
At each central pair, the link of the singularity is the mapping torus
\[
 L=\frac{S^2\times[0,1]}{(u,1)\sim(-u,0)}
 \cong
 \frac{S^2\times S^1}{(u,z)\sim(-u,-z)}.
\]
In particular,
\[
 \pi_1(L)\cong\mathbb Z,
 \qquad H_2(L;\mathbb Z)\cong\mathbb Z/2,
\]
so \(L\) is not homeomorphic to \(S^3\).  Therefore \(\mathcal C\) is
not a topological manifold at the four central pairs.
\end{proposition}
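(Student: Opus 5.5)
The plan is to work upstairs on the double cover and then descend, using the description of \(\mathcal C_{\mathrm{reg}}\) from Proposition~\ref{prop:pillowcase-fibration}. Near a central pair, translate so the pair is \((1,1)\); left multiplication by \((\varepsilon_1,\varepsilon_2)\) preserves \(\mathcal C\) and shifts \(\xi\) by a point of \(\T^2[2]\), so a single case suffices. Take a small circle \(\gamma(t)=r(\cos t,\sin t)\), \(t\in[0,\pi]\), in \(\T^2_*\) around \((0,0)\). Since \(\gamma(\pi)=-\gamma(0)\), its image under \(\rho\) is a closed loop around the puncture \(p_1\) in \(\mathcal P^{\circ}\), generating the local fundamental group. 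Because \(\rho^*\mathcal C_{\mathrm{reg}}\) is trivial, the fibre over \(\gamma(t)\) is parametrised by \(u\) via \([u,\gamma(t)]\). Transporting \(u\) constantly from \(t=0\) to \(t=\pi\) arrives at \([u,-\gamma(0)]=[-u,\gamma(0)]\). So the monodromy is \(u\mapsto -u\), which has degree \(-1\) on \(S^2\) and therefore reverses orientation.

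For the link, intersect \(\mathcal C\) with a small sphere around \((1,1)\) in \(\SU(2)^2\). Using \(\Psi\), a neighbourhood of \((1,1)\) in \(\mathcal C\) is \(\{(e^{u\theta},e^{u\varphi}): |(\theta,\varphi)|<\epsilon\}\). Modulo the Weyl identification this is the cone on \(\bigl(S^2\times S^1_r\bigr)/(u,z)\sim(-u,-z)\), where \(S^1_r\) is the circle \(|\xi|=r\) and \(-z\) is the antipode on it. I will check that the distance to \((1,1)\) is a function of \(|\xi|\) alone; it is, since \(|e^{u\theta}-1|\) depends only on \(\theta\). This means the level sets are exactly these quotients, and the cone structure is honest. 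Restricting \(z\) to the half circle \(t\in[0,\pi]\) gives a fundamental domain, which yields the mapping-torus description \(S^2\times[0,1]/(u,1)\sim(-u,0)\) with antipodal gluing.

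The invariants come from the fibration \(S^2\to L\to S^1\) with monodromy \(-1\). The long exact sequence, with \(\pi_1(S^2)=0\), gives \(\pi_1(L)\cong\Z\). For \(H_2\), apply the Wang sequence
\[
 H_2(S^2)\xrightarrow{\,1-m_*\,}H_2(S^2)\to H_2(L)\to H_1(S^2)=0 .
\]
Here \(m_*=-1\), so \(1-m_*\) is multiplication by \(2\), and \(H_2(L)\cong\Z/2\). Since \(H_2(S^3)=0\) and \(\pi_1(S^3)=0\), we conclude \(L\not\cong S^3\).

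The main obstacle is the last deduction, that \(\mathcal C\) is not a topological manifold at the central pair; this is where I would spend the most care. Suppose \(\mathcal C\) were a \(4\)-manifold near \((1,1)\), and let \(x=(1,1)\) and \(U\) be the open cone neighbourhood. Local homology gives \(H_k(U,U\setminus x)\cong \tilde H_{k-1}(L)\). For a manifold point this must equal \(\tilde H_{k-1}(S^3)\), which vanishes for \(k=2,3\). But \(\tilde H_{2}(L)=\Z/2\), so \(H_3(U,U\setminus x)\cong\Z/2\neq0\), a contradiction. This argument avoids any appeal to the uniqueness of links, needing only the cone structure established above and excision.
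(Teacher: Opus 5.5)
Your argument is correct and is essentially the paper's proof. The paper writes nearby commuting pairs as \(\exp X,\exp Y\) with \((X,Y)=(au,bu)\) and the ambiguity \((u,a,b)\sim(-u,-a,-b)\). That is exactly your \(\Psi\)-parametrization with \((a,b)=(\theta,\varphi)\). The half-loop monodromy computation, the mapping-torus description of \(L\) and the Wang sequence then coincide with yours.

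The real difference is the last step. The paper just asserts that the link of an ordinary point of a topological \(4\)-manifold is \(S^3\). That tacitly uses some well-definedness of links, which is true here but needs an extra argument in the topological category. Your computation \(H_3(U,U\setminus x)\cong\tilde H_2(L)\cong\Z/2\) needs only an open cone neighbourhood and excision, so it is the cleaner deduction. Alternatively, \(H_2(U,U\setminus x)\cong\tilde H_1(L)\cong\Z\) gives the same contradiction.

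There is one small slip. The reason you give for the distance to \((1,1)\) being a function of \(|\xi|\) only shows that it is independent of \(u\). For the chordal distance in \(\Hh^2\), the squared distance is \(4\sin^2(\theta/2)+4\sin^2(\varphi/2)\), which is not radial. For the round bi-invariant product metric it is exactly \(\theta^2+\varphi^2\), so you could use that metric. Or you could note that the chordal function is even in \(\xi\) and increasing along rays near \(0\), so its small level sets are still circles invariant under \(\xi\mapsto-\xi\).

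In any case you do not need a metric sphere for your conclusion. The set \(\{\Psi(u,\xi):|\xi|<\epsilon\}\) is already an open neighbourhood of \((1,1)\) in \(\mathcal C\), homeomorphic to the open cone on \(L\), and that is all the local-homology argument uses. The paper avoids this point by working in \(\mathfrak{su}(2)^2\), where \(|X|^2+|Y|^2=a^2+b^2\) makes the model an exact cone.
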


\begin{proof}
We work near \((1,1)\); multiplication by central elements gives the same
model at the other three central pairs.  In exponential coordinates write
\[
 q_1=\exp X,\qquad q_2=\exp Y,
 \qquad X,Y\in\mathfrak{su}(2)\cong\mathbb R^3.
\]
For sufficiently small \(X,Y\), the two group elements commute if and
only if \([X,Y]=0\), equivalently \(X\times Y=0\).  Thus the local model is
\[
 V=\{(X,Y)\in\mathbb R^3\times\mathbb R^3:X\times Y=0\}.
\]
Every nonzero point of \(V\) can be written as
\[
 (X,Y)=(au,bu),
 \qquad u\in S^2,
 \quad (a,b)\in\mathbb R^2\setminus\{0\},
\]
with the unique ambiguity
\[
 (u,a,b)\sim(-u,-a,-b).
\]
The punctured local base is therefore
\((\mathbb R^2\setminus\{0\})/\{v\sim-v\}\).  A loop going once around
its puncture lifts to a path in \(\mathbb R^2\setminus\{0\}\) beginning
at \(v_0\) and ending at \(-v_0\).  Transporting \(u\) constantly along
this lifted path gives \((u,-v_0)\) at the endpoint, which is identified
with \((-u,v_0)\).  Hence the fibre return map is exactly the antipodal
map \(u\mapsto-u\).  Since the antipodal map on \(S^2\) has degree \(-1\),
it reverses orientation.

Intersecting \(V\) with a small Euclidean \(5\)-sphere imposes
\(a^2+b^2=1\), and therefore gives
\[
 L\cong(S^2\times S^1)/\bigl((u,z)\sim(-u,-z)\bigr),
\]
which is the mapping torus of the antipodal map.  Its fundamental group
is \(\mathbb Z\).  The Wang exact sequence for the mapping torus, using
that the antipodal map acts as \(-1\) on \(H_2(S^2;\mathbb Z)\), gives
\(H_2(L;\mathbb Z)\cong\mathbb Z/2\).  Hence \(L\not\cong S^3\), whereas
the link of an ordinary point of a topological four-manifold is \(S^3\).
\end{proof}

Thus \(\mathcal C\) may be viewed as a twisted conjugacy-sphere bundle
over the four-punctured sphere, completed by collapsing the fibre over
each missing point to one of the four central pairs.  The antipodal
monodromy is the precise reason this completion is singular rather than
an ordinary \(S^2\)-bundle over \(S^2\).

\section{The quaternionic Fibonacci cat map}

Consider the hyperbolic matrix
\[
 A=\begin{pmatrix}2&1\\1&1\end{pmatrix},
 \qquad
 \lambda_\pm=\frac{3\pm\sqrt5}{2}.
\]
It is the product
\[
 A=
 \begin{pmatrix}1&1\\0&1\end{pmatrix}
 \begin{pmatrix}1&0\\1&1\end{pmatrix}.
\]
The corresponding composition of Nielsen maps is
\begin{equation}\label{eq:catmap}
 F(q_1,q_2)=\bigl(q_1^2q_2,q_1q_2\bigr).
\end{equation}
This example happens also to have power-product form.

\begin{proposition}\label{prop:catinverse}
The map \eqref{eq:catmap} is a real-analytic diffeomorphism with
\[
 F^{-1}(u,v)=\bigl(uv^{-1},vu^{-1}v\bigr).
\]
In general this is not the power-product expression associated with
\(A^{-1}\).
\end{proposition}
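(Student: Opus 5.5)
The plan is to realize \eqref{eq:catmap} as the evaluation of an explicit automorphism of \(F_2\), so that Proposition~\ref{prop:diffeomorphism} gives real analyticity and invertibility, and then to check the stated inverse formula by direct substitution. First I confirm the Nielsen decomposition. With \(\Phi(x)=x^2y\), \(\Phi(y)=xy\), we have \(F=F_\Phi\), and \(\Phi\) is a composite of the elementary transformations \((x,y)\mapsto(x,xy)\) and \((x,y)\mapsto(xy,y)\). Indeed, by \eqref{eq:composition}, \(N_{12}\circ N_{21}(q_1,q_2)=N_{12}(q_1,q_1q_2)=(q_1^2q_2,q_1q_2)\). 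Hence \(F=N_{12}\circ N_{21}\) is a composition of two real-analytic diffeomorphisms, whose inverses are listed explicitly in the table of elementary evaluations.

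Next I compute the inverse as \(F^{-1}=N_{21}^{-1}\circ N_{12}^{-1}\). Applying the table: \(N_{12}^{-1}(u,v)=(uv^{-1},v)\), and then \(N_{21}^{-1}(uv^{-1},v)=\bigl(uv^{-1},(uv^{-1})^{-1}v\bigr)=(uv^{-1},vu^{-1}v)\), which is the claimed formula. As a sanity check I verify \(F\circ F^{-1}=\id\) directly. Set \(q_1=uv^{-1}\) and \(q_2=vu^{-1}v\). Then \(q_1q_2=uv^{-1}vu^{-1}v=v\), and \(q_1^2q_2=uv^{-1}\cdot v=u\). Both compositions are the identity by Proposition~\ref{prop:diffeomorphism}.

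Finally, the remark that this is not the power-product map of \(A^{-1}\) needs a counterexample. Here \(A^{-1}=\left(\begin{smallmatrix}1&-1\\-1&2\end{smallmatrix}\right)\), so \(M_{A^{-1}}(u,v)=(uv^{-1},u^{-1}v^2)\). The first coordinates agree, so it suffices to show that \(vu^{-1}v\neq u^{-1}v^2\) for some noncommuting pair. Equality is equivalent to \(vu^{-1}=u^{-1}v\), that is, to \(u\) and \(v\) commuting. Taking \(u=\mathbf i\) and \(v=\mathbf j\) gives \(vu^{-1}v=\mathbf j(-\mathbf i)\mathbf j=-\mathbf i\) while \(u^{-1}v^2=(-\mathbf i)(-1)=\mathbf i\), so the two maps differ. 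The only step needing care is keeping the order of composition straight under the contravariant rule \eqref{eq:composition}, and the direct substitution check handles that.
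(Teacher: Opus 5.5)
Your proof is correct and is essentially the paper's argument. The paper solves \(u=q_1^2q_2\), \(v=q_1q_2\) directly to get \(q_1=uv^{-1}\) and \(q_2=q_1^{-1}v=vu^{-1}v\), which is the same computation as your factor-by-factor inversion \(N_{21}^{-1}\circ N_{12}^{-1}\); your remark that \(vu^{-1}v=u^{-1}v^2\) holds exactly when \(u\) and \(v\) commute, with the witness \(u=\mathbf i\), \(v=\mathbf j\), makes precise the ``in general'' clause that the paper only asserts.
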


\begin{proof}
If \(u=q_1^2q_2\) and \(v=q_1q_2\), then
\[
 uv^{-1}=q_1^2q_2(q_1q_2)^{-1}=q_1.
\]
It follows that
\[
 q_2=q_1^{-1}v=(uv^{-1})^{-1}v=vu^{-1}v.
\]
Direct substitution proves the assertion.
\end{proof}

The inverse matrix is
\[
 A^{-1}=\begin{pmatrix}1&-1\\-1&2\end{pmatrix},
\]
whose naive power-product expression would be
\((uv^{-1},u^{-1}v^2)\), generally different from the actual inverse.
This gives a simple manifestation of the noncommutativity.

\section{Invariant volume}

Let \(m\) be normalized Haar measure on \(\SU(2)\), and let
\(m\times m\) be product Haar measure.  Let \(\nu\) be the bi-invariant
Riemannian volume form on \(\SU(2)\) and
\[
 \Omega=\pi_1^*\nu\wedge\pi_2^*\nu.
\]

\begin{theorem}\label{thm:volume}
Every quaternionic Nielsen map \(F_\Phi\) preserves product Haar measure.
If \(A=\rhoexp(\Phi)\), then
\[
 F_\Phi^*\Omega=(\det A)\Omega.
\]
In particular, every lift of a matrix in \(\mathrm{SL}(2,\Z)\) is
orientation preserving and volume preserving.
\end{theorem}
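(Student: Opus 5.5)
Since \(\Aut(F_2)\) is generated by the elementary Nielsen transformations, and by \eqref{eq:composition} the map \(F_\Phi\) is a finite composition of the evaluations \(N_{12}^{\pm1},N_{21}^{\pm1},I_1,P\) (together with the analogous right-multiplication moves), I will reduce the statement to a computation for these generators. Two facts make this reduction work. First, pullback is contravariant, so \(F_\Phi^*F_\Psi^*\Omega\) is computed step by step. Second, \(\det\rhoexp\) is multiplicative even though \(\rhoexp\) is only an anti-homomorphism. Hence if each generator \(G\) satisfies \(G^*\Omega=\epsilon_G\Omega\) with \(\epsilon_G=\det\rhoexp(G)\in\{\pm1\}\), then for a composition we get \(F_\Phi^*\Omega=\prod\epsilon_G\,\Omega=(\det A)\Omega\).

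For the transvection \(N_{12}(q_1,q_2)=(q_1q_2,q_2)\), I will fix \(q_2\) and restrict to a slice. On \(\SU(2)\times\{q_2\}\) the map is right translation by \(q_2\), which preserves \(\nu\) because \(\nu\) is bi-invariant. To make this rigorous, I compute the differential in left-trivialized coordinates. At \((q_1,q_2)\), with tangent vectors \((q_1X,q_2Y)\), we have
\[
 d(q_1q_2)=q_1Xq_2+q_1q_2Y=q_1q_2\bigl(\mathrm{Ad}_{q_2^{-1}}X+Y\bigr).
\]
So in these frames \(DN_{12}\) is block triangular, \(\begin{pmatrix}\mathrm{Ad}_{q_2^{-1}}&I\\0&I\end{pmatrix}\). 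Its determinant is \(\det\mathrm{Ad}_{q_2^{-1}}=1\), because \(\mathrm{Ad}\) lands in \(SO(3)\). Since \(\Omega\) in left-invariant frames is the constant form \(\nu_e\wedge\nu_e\), it follows that \(N_{12}^*\Omega=\Omega\). The cases \(N_{21}\) and the left-multiplication variants are identical, with \(\mathrm{Ad}\) appearing in the appropriate block.

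For the inversion \(I_1\), the differential of \(q\mapsto q^{-1}\) is \(qX\mapsto -Xq^{-1}=q^{-1}(-\mathrm{Ad}_qX)\). This has determinant \(\det(-\mathrm{Ad}_q)=(-1)^3=-1\) on the three-dimensional tangent space, so \(I_1^*\Omega=-\Omega\), matching \(\det\rhoexp(I_1)=-1\). For the swap \(P\), we have \(P^*(\pi_1^*\nu\wedge\pi_2^*\nu)=\pi_2^*\nu\wedge\pi_1^*\nu=(-1)^{3\cdot3}\Omega=-\Omega\), matching \(\det=-1\).

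Measure preservation follows because \(F_\Phi\) is a diffeomorphism with \(F_\Phi^*\Omega=\pm\Omega\), so \(|\Omega|\) is invariant, and \(m\times m\) is a normalized multiple of \(|\Omega|\). When \(\det A=1\), the identity \(F_\Phi^*\Omega=\Omega\) also gives orientation preservation.

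The main obstacle is bookkeeping rather than any deep step. I must check that the generating set I use includes every elementary move, including right multiplication and \(y\)-inversion; all of these are handled by the same \(\mathrm{Ad}\)-determinant argument. I must also check that the sign for each generator really equals \(\det\rhoexp\) of that generator, so that the product formula applies without any ordering issue.
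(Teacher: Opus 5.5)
Your proof is correct and follows essentially the same route as the paper: reduce to the elementary Nielsen generators, check that each generator \(G\) satisfies \(G^*\Omega=(\det\rhoexp(G))\,\Omega\), and conclude by multiplicativity of the determinant. The paper proves invariance of \(m\times m\) for the shears by Fubini and right invariance, and states the orientation signs separately. You instead get both facts from one pointwise Jacobian computation in left-trivialized frames, using \(\det\mathrm{Ad}=1\) and \(\det(-\mathrm{Ad}_q)=-1\), which makes it more explicit why \(F_\Phi^*\Omega=\pm\Omega\) holds pointwise.
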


\begin{proof}
It suffices to check elementary Nielsen generators.  For example, for
\(N_{12}(q_1,q_2)=(q_1q_2,q_2)\) and every continuous test function
\(\varphi\), right invariance of Haar measure gives
\begin{align*}
 \int\varphi(q_1q_2,q_2)\,dm(q_1)dm(q_2)
 &=\int\varphi(u,q_2)\,dm(u)dm(q_2).
\end{align*}
The proof for \(N_{21}\) is identical.  Inversion and permutation also
preserve Haar measure.  Therefore every composition preserves product
Haar measure.

For orientations, the two shear Nielsen maps are orientation preserving.
Inversion on the three-dimensional group \(\SU(2)\) reverses orientation,
and interchanging the two three-dimensional factors also reverses
orientation.  These signs agree with the determinants of the corresponding row
exponent matrices (equivalently, of the standard abelianization
matrices).  Multiplicativity gives the stated formula.
\end{proof}

\begin{corollary}[Poincar\'e recurrence]\label{cor:recurrence}
For every quaternionic Nielsen map $F_\Phi$, almost every point of
$\SU(2)^2$, with respect to product Haar measure $m\times m$, is
recurrent.  More precisely, for $(m\times m)$-almost every
$p\in\SU(2)^2$ there exists a sequence $n_j\to\infty$ such that
\[
 F_\Phi^{n_j}(p)\longrightarrow p.
\]
Equivalently, if $E\subset\SU(2)^2$ is measurable, then almost every
point of $E$ returns to $E$ infinitely many times.
\end{corollary}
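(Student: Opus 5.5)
The plan is to deduce the corollary directly from Theorem~\ref{thm:volume} via the Poincar\'e recurrence theorem. No further dynamics is needed beyond invariance of a finite measure and the fact that \(\SU(2)^2\) is a compact metric space with a countable base.

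\textbf{Step 1: a finite invariant measure.} By Theorem~\ref{thm:volume}, \(F_\Phi\) preserves \(\mu:=m\times m\), which is a probability measure. Since \(F_\Phi\) is a diffeomorphism (Proposition~\ref{prop:diffeomorphism}), it is a measurable bijection with measurable inverse, so \(\mu(F_\Phi^{-n}E)=\mu(E)\) for every measurable \(E\) and every \(n\in\Z\).

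\textbf{Step 2: set-wise recurrence.} For measurable \(E\), let \(B=\{p\in E: F_\Phi^n(p)\notin E \text{ for all } n\ge 1\}\).
\begin{itemize}
\item The sets \(F_\Phi^{-n}B\), \(n\ge0\), are pairwise disjoint. Indeed, if \(p\in F_\Phi^{-i}B\cap F_\Phi^{-j}B\) with \(i<j\), then \(F_\Phi^i p\in B\) while \(F_\Phi^{j-i}(F_\Phi^ip)\in B\subset E\), a contradiction.
\item Each of these sets has measure \(\mu(B)\), and the total mass is \(1\), so \(\mu(B)=0\).
\item Applying the same argument to \(F_\Phi^k\) for each \(k\ge1\) shows that the set of points of \(E\) returning to \(E\) only finitely often is a countable union of null sets. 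This gives infinitely many returns almost everywhere, which is the second (``equivalently'') formulation.
\end{itemize}

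\textbf{Step 3: pointwise recurrence.} Fix a countable base \(\{U_k\}\) of the topology of \(\SU(2)^2\), for example balls of rational radius centred at a countable dense set in the bi-invariant metric. For each \(k\), Step 2 yields a null set \(N_k\subset U_k\) off which points of \(U_k\) return to \(U_k\) infinitely often. Put \(N=\bigcup_k N_k\), which is null.

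For \(p\notin N\), choose basic sets \(U_{k_j}\ni p\) with diameters tending to \(0\). Pick inductively times \(n_j>n_{j-1}\) with \(F_\Phi^{n_j}(p)\in U_{k_j}\); this is possible because returns to each such set happen infinitely often. Then \(F_\Phi^{n_j}(p)\to p\).

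The converse direction of the ``equivalently'' clause is not really needed. The statement asserts both properties, and both follow from Step 2 and Step 3 as above.

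There is no substantial obstacle. The only point needing care is Step 3: passing from set-wise to pointwise recurrence requires second countability and the insistence on \emph{infinitely} many returns, which is what guarantees \(n_j\to\infty\).
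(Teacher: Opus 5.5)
Your proposal is correct and follows the paper's route: the invariance of $m\times m$ from Theorem~\ref{thm:volume} combined with the Poincar\'e recurrence theorem. The paper simply cites Poincar\'e recurrence (Katok--Hasselblatt), whereas you prove it inline, including the second-countability argument that passes from set-wise to pointwise recurrence, and both of your steps are sound.
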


\begin{proof}
The space $\SU(2)^2$ is compact and $m\times m$ is a finite invariant
measure.  The conclusion is therefore the Poincar\'e recurrence theorem
applied to the measure-preserving transformation $F_\Phi$; see, for
example, \cite{KatokHasselblatt}.
\end{proof}

For the cat map \eqref{eq:catmap},
\[
 \Jac_\Omega F\equiv1.
\]
\section{Entropy and periodic points}

Suppose that \(A\in\mathrm{SL}(2,\Z)\) is hyperbolic, with expanding
eigenvalue \(\lambda_A>1\).  By
Theorem~\ref{thm:torusrestriction}, every invariant torus carries the
ordinary Anosov automorphism \(A\).  Monotonicity of topological entropy
under restriction to a compact invariant subset gives the following.

\begin{theorem}\label{thm:entropy}
For every quaternionic lift \(F_\Phi\) of a hyperbolic matrix
\(A\in\mathrm{SL}(2,\Z)\),
\[
 h_{\mathrm{top}}(F_\Phi)\geq \log\lambda_A>0.
\]
For the Fibonacci cat map,
\[
 h_{\mathrm{top}}(F)\geq
 \log\left(\frac{3+\sqrt5}{2}\right).
\]
\end{theorem}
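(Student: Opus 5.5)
The plan is to bound the entropy from below by restricting $F_\Phi$ to a single invariant torus. Fix one maximal torus, say $S^1_{\mathbf i}$, and let $K=\T^2_{\mathbf i}=S^1_{\mathbf i}\times S^1_{\mathbf i}$. This is a compact subset of $\SU(2)^2$.

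By Theorem~\ref{thm:torusrestriction}, $K$ is invariant under $F_\Phi$. In the angular coordinates $(\theta_1,\theta_2)$, the restriction $F_\Phi|_K$ is exactly the linear toral automorphism $f_A$ induced by $A$ on $\R^2/2\pi\Z^2$. The map $\theta\mapsto(e^{\mathbf i\theta_1},e^{\mathbf i\theta_2})$ is a homeomorphism from $\R^2/2\pi\Z^2$ onto $K$, and it conjugates $f_A$ to $F_\Phi|_K$. Topological entropy is a conjugacy invariant, so
\[
 h_{\mathrm{top}}(F_\Phi|_K)=h_{\mathrm{top}}(f_A).
\]

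Next I will use the classical formula for linear toral automorphisms, which can be cited from \cite{KatokHasselblatt}:
\[
 h_{\mathrm{top}}(f_A)=\sum_{|\lambda|>1}\log|\lambda|.
\]
For hyperbolic $A\in\mathrm{SL}(2,\Z)$ the eigenvalues are $\lambda_A$ and $\lambda_A^{-1}$ with $\lambda_A>1$, so this sum equals $\log\lambda_A>0$. If $\tr A<-2$, the expanding eigenvalue is negative. In that case I read $\lambda_A$ as its absolute value, as the statement implicitly does.

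Monotonicity then finishes the argument. For a compact invariant subset $K$ of a compact metric system, $h_{\mathrm{top}}(F_\Phi)\ge h_{\mathrm{top}}(F_\Phi|_K)$. This holds because $(n,\varepsilon)$-separated sets in $K$ stay separated in the ambient space, or alternatively by the variational principle applied to invariant measures supported on $K$. Here one should use the product metric on $\SU(2)^2$. It restricts to a metric on $K$ that is equivalent to the flat metric, and entropy on a compact space does not depend on the choice of compatible metric.

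For the Fibonacci cat map, $A=\begin{pmatrix}2&1\\1&1\end{pmatrix}$ has $\tr A=3$, so its expanding eigenvalue is $\lambda_+=(3+\sqrt5)/2$. Substituting this gives the second inequality.

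No step is a real obstacle. The only point needing care is to cite the entropy formula for linear toral automorphisms correctly, and to make sure the torus coordinates are an honest topological conjugacy; Theorem~\ref{thm:torusrestriction} provides that conjugacy directly.
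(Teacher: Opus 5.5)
Your proposal is correct and follows the paper's argument. You restrict to one invariant torus \(\T^2_I\), where by Theorem~\ref{thm:torusrestriction} the map \(F_\Phi\) is topologically conjugate to the linear automorphism induced by \(A\), whose entropy is \(\log\lambda_A\), and you conclude by monotonicity of topological entropy under restriction to a compact invariant subset. Your extra care about the conjugacy, the independence of entropy from the compatible metric, and the case \(\tr A<-2\) (where \(\lambda_A\) must be read as the absolute value of the expanding eigenvalue) supplies details the paper leaves implicit.
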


Whether equality always holds is not settled by this argument; additional
entropy could, in principle, arise from noncommuting directions.

Periodic points of a hyperbolic toral automorphism are dense.  More
precisely,
\[
 \#\Fix(A^n)=|\det(A^n-I)|.
\]
Applying this on every common maximal torus proves density on the
regular part of the commuting locus.  The four central pairs are also in
the closure: every neighborhood of a central pair in \(\mathcal C\)
contains points on nearby common maximal tori, and on each such torus
periodic points are dense.  Hence:

\begin{corollary}\label{cor:densecommuting}
For a quaternionic lift of a hyperbolic matrix, periodic points are dense
in the whole commuting locus \(\mathcal C\), including its four singular
central points.  For the Fibonacci cat map,
\[
 \#\Fix(A^n)=\lambda_+^n+\lambda_+^{-n}-2,
\]
so the toral periodic data grow exponentially with rate
\(\log\lambda_+\).
\end{corollary}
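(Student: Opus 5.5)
The plan is to reduce everything to Theorem~\ref{thm:torusrestriction}, which identifies \(F_\Phi|_{\T_I^2}\) with the toral automorphism \(A\) in angular coordinates. There are three steps.

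\textbf{Step 1: density on the regular part.} Let \((q_1,q_2)\in\mathcal C\) be noncentral. It lies in a unique \(\T_I^2\), which is invariant. Let \((\theta_1,\theta_2)\) be its angular coordinates. Since \(A\) is hyperbolic, the rational points \(\frac{2\pi}{N}\Z^2/2\pi\Z^2\) are exactly the periodic points of the toral automorphism. Indeed, \(A\) permutes the finite set of \(N\)-torsion points, and conversely a point fixed by \(A^n\) lies in the finite group \(\ker(A^n-I)\) on \(\T^2\), which is torsion. These rational points are dense in \(\T^2\). Through the chart \(\theta\mapsto(e^{I\theta_1},e^{I\theta_2})\), they give \(F_\Phi\)-periodic points of \(\SU(2)^2\) accumulating on \((q_1,q_2)\).

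\textbf{Step 2: the central pairs.} A central pair \((\varepsilon_1,\varepsilon_2)\) lies in every \(\T_I^2\). In each \(\T_I^2\) it is itself periodic, corresponding to a point of \(\T^2[2]\). These points are fixed by \(A\bmod 2\) up to a permutation of the four \(2\)-torsion points, so they have period at most \(3\). Hence these points are trivially in the closure of the periodic set. Step 1 also gives an independent argument, since periodic points on any fixed \(\T_I^2\) accumulate there. Combining Steps 1 and 2 gives density in all of \(\mathcal C\).

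\textbf{Step 3: the count.} Compute \(\#\Fix(A^n)=|\det(A^n-I)|\). This is the standard fact that \(A^n-I\) is a nonsingular integer matrix, hence a covering endomorphism of \(\T^2\) of degree \(|\det|\), whose kernel is \(\Fix(A^n)\). With eigenvalues \(\lambda_\pm^n\) and \(\lambda_+\lambda_-=1\),
\[
\det(A^n-I)=(\lambda_+^n-1)(\lambda_-^n-1)=2-\lambda_+^n-\lambda_+^{-n}.
\]
This is negative for \(n\ge1\), so its absolute value is \(\lambda_+^n+\lambda_+^{-n}-2\). The exponential growth rate \(\log\lambda_+\) follows immediately.

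I expect no real obstacle. The only point requiring care is the periodicity claim in Step 1 (periodic points \(=\) torsion points), and that follows from invertibility of \(A\) over \(\Z\).
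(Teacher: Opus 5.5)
Your proof is correct and follows the paper's route. You restrict to each invariant torus $\T_I^2$ via Theorem~\ref{thm:torusrestriction}, use density of the periodic (torsion) points of $A$ there, take the union over all $I$, and compute $|\det(A^n-I)|$ from $\lambda_+\lambda_-=1$. The one small difference is at the central pairs: you observe that they are themselves periodic, permuted through $A \bmod 2$ with period at most $3$, which is more direct than the paper's argument via accumulation from nearby tori. (For density only the inclusion ``torsion $\Rightarrow$ periodic'' is needed, and that does follow from invertibility over $\Z$; the reverse inclusion uses hyperbolicity.)
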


The commuting locus is a proper closed subset of \(\SU(2)^2\), and this
argument does not prove density of periodic points in the whole
six-manifold.

\section{Conjugation symmetry and periodic two-spheres}

The group \(\SU(2)\) acts on \(\SU(2)^2\) by simultaneous conjugation:
\[
 h\cdot(q_1,q_2)=(hq_1h^{-1},hq_2h^{-1}).
\]

\begin{proposition}\label{prop:equivariance}
Every word map \(F_\Phi\) is equivariant for simultaneous conjugation:
\[
 F_\Phi(h\cdot(q_1,q_2))=h\cdot F_\Phi(q_1,q_2).
\]
\end{proposition}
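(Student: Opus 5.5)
The plan is to reduce the statement to the fact that conjugation by a fixed \(h\in\SU(2)\) is a group automorphism \(c_h(g)=hgh^{-1}\) of \(\SU(2)\), and that word evaluation is natural with respect to group homomorphisms. Concretely, I will first prove the following: for every word \(w\in F_2\), every homomorphism \(\alpha:\SU(2)\to\SU(2)\), and every pair \((q_1,q_2)\),
\[
 w\bigl(\alpha(q_1),\alpha(q_2)\bigr)=\alpha\bigl(w(q_1,q_2)\bigr).
\]
This follows by induction on the length of the reduced word \(w\). The empty word gives \(1=\alpha(1)\). A single letter \(x^{\pm1}\) or \(y^{\pm1}\) gives the identity \(\alpha(q_i)^{\pm1}=\alpha(q_i^{\pm1})\). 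For the inductive step, write \(w=w'\ell\) with \(\ell\) a letter and use \(\alpha(gk)=\alpha(g)\alpha(k)\). Equivalently, one can phrase it as the universal property of \(F_2\): evaluation at \((q_1,q_2)\) is the unique homomorphism \(\mathrm{ev}_{(q_1,q_2)}:F_2\to\SU(2)\) sending \(x\mapsto q_1\) and \(y\mapsto q_2\), and \(\alpha\circ\mathrm{ev}_{(q_1,q_2)}\) and \(\mathrm{ev}_{(\alpha q_1,\alpha q_2)}\) agree on the generators, hence coincide.

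Next I apply this with \(\alpha=c_h\) to the two words \(w_1=\Phi(x)\) and \(w_2=\Phi(y)\). By the definition of \(F_\Phi\),
\[
 F_\Phi(hq_1h^{-1},hq_2h^{-1})=\bigl(w_1(c_hq_1,c_hq_2),\,w_2(c_hq_1,c_hq_2)\bigr)
 =\bigl(c_h w_1(q_1,q_2),\,c_h w_2(q_1,q_2)\bigr),
\]
and the right-hand side is exactly \(h\cdot F_\Phi(q_1,q_2)\).

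There is no real obstacle. The only point needing care is that \(h\) is fixed, independent of the point \((q_1,q_2)\), so \(c_h\) is a genuine homomorphism. This contrasts with the point-dependent conjugation by \(w(q_1,q_2)\) appearing in Proposition~\ref{prop:nonunique}, which is not of this form. Note also that the argument never uses that \(\Phi\) is an automorphism, so the equivariance holds for arbitrary word maps, including the power-product maps \(M_A\).
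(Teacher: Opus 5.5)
Your proof is correct and takes essentially the same approach as the paper, which simply asserts the identity \(w(hq_1h^{-1},hq_2h^{-1})=h\,w(q_1,q_2)\,h^{-1}\) for every group word and applies it to the two coordinate words; you additionally justify that identity through naturality of word evaluation under the automorphism \(g\mapsto hgh^{-1}\). Your remark that the argument never uses that \(\Phi\) is an automorphism is also right, so equivariance holds for arbitrary word maps such as \(M_A\).
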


\begin{proof}
For every group word \(w\),
\[
 w(hq_1h^{-1},hq_2h^{-1})=h w(q_1,q_2)h^{-1}.
\]
Apply this identity to the two coordinate words of \(F_\Phi\).
\end{proof}

\subsection*{Periodic conjugacy spheres}
The expression \emph{periodic conjugacy sphere} means something stronger
than a sphere which merely returns to itself periodically.  It is a
simultaneous-conjugation orbit on which the appropriate return map is
pointwise the identity.  In particular, it is not a sphere on which the
map acts by a nontrivial periodic rotation.  Let \(p=(q_1,q_2)\) be a noncentral commuting
periodic point of least period \(n\), and let
\[
 \mathcal O_p=
 \{(hq_1h^{-1},hq_2h^{-1}):h\in\SU(2)\}
\]
be its simultaneous-conjugation orbit.  Equivariance gives
\[
 F_\Phi^n(h\cdot p)=h\cdot F_\Phi^n(p)=h\cdot p,
\]
and therefore
\[
 F_\Phi^n|_{\mathcal O_p}=\operatorname{id}_{\mathcal O_p}.
\]
Thus the entire conjugacy orbit, not merely the original point, is
pointwise fixed by the return map.

For a generic commuting noncentral pair, the common stabilizer is the
maximal torus \(S^1\subset\SU(2)\) containing both components.  The
orbit--stabilizer theorem for Lie-group actions therefore gives
\[
 \mathcal O_p\cong\SU(2)/S^1\cong\Sph^2.
\]
We call this orbit a \emph{periodic conjugacy sphere}.  If \(p\) has exact
period \(n>1\), the map may cyclically permute
\(\mathcal O_p,\mathcal O_{F_\Phi(p)},\ldots,
\mathcal O_{F_\Phi^{n-1}(p)}\), while its \(n\)-th iterate fixes each sphere pointwise.  Thus
\(F_\Phi\) may permute a cycle of spheres, but the first-return map on
each member of the cycle is exactly the identity.  In the character-variety quotient the whole sphere
collapses to the single periodic class \([p]\).

\begin{theorem}\label{thm:notanosov}
A quaternionic lift of a hyperbolic matrix is not an Anosov diffeomorphism
of \(\SU(2)^2\).  It is also not \(C^1\)-structurally stable.
\end{theorem}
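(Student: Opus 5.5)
The plan is to use the periodic conjugacy spheres constructed just above. Since \(A\) is hyperbolic, Corollary~\ref{cor:densecommuting} gives a toral periodic point \(\xi\) of some period \(n\) with \(\xi\notin\T^2[2]\); for instance a fixed point of \(A^n\) that is not \(2\)-torsion, which exists because \(\#\Fix(A^n)=|\det(A^n-I)|\to\infty\). Take a maximal torus \(S_I^1\) and the corresponding point \(p\in\T_I^2\). At least one coordinate of \(p\) is then noncentral, so its stabilizer under simultaneous conjugation is exactly \(S_I^1\), and \(\mathcal O_p\cong\Sph^2\) is a two-dimensional submanifold through \(p\). By Proposition~\ref{prop:equivariance}, \(F_\Phi^n|_{\mathcal O_p}=\id\). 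Differentiating gives \(DF_\Phi^n(p)v=v\) for every \(v\in T_p\mathcal O_p\), which is a two-dimensional subspace of \(T_p\SU(2)^2\). So \(1\) is an eigenvalue of \(DF^n_\Phi(p)\).

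Non-Anosov follows from this. If \(F_\Phi\) were Anosov, then \(F_\Phi^n\) would also be Anosov, with splitting \(E^s\oplus E^u\). On such a splitting \(\|DF^{kn}_\Phi v\|\) grows or decays exponentially for every nonzero \(v\), since any \(v\) has a nonzero stable or unstable component. That is impossible for a fixed nonzero vector \(v\in T_p\mathcal O_p\). Equivalently, every periodic point of an Anosov map is hyperbolic, and \(p\) is not.

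For the failure of \(C^1\)-structural stability, I will argue as follows. Fixed points of \(F_\Phi^n\) are not isolated, since \(\mathcal O_p\) is a continuum of them. A small \(C^1\) perturbation \(G\) can be chosen to be hyperbolic at isolated points. The concrete plan is to compose \(F_\Phi\), near \(\mathcal O_p\) and supported in a small neighborhood of a single point \(p\) (disjoint from its other iterates), with a map that locally slides along the sphere direction by the time-\(\varepsilon\) flow of a gradient-like vector field on \(\mathcal O_p\). This produces finitely many isolated fixed points on the sphere; alternatively, one can use Kupka--Smale genericity. If \(G\) were topologically conjugate to \(F_\Phi\) by \(h\), then \(h\) would carry \(\Fix(F_\Phi^n)\) homeomorphically onto \(\Fix(G^n)\). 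But \(\Fix(F_\Phi^n)\) contains the connected component \(\mathcal O_p\), a non-degenerate continuum. By Kupka--Smale, \(\Fix(G^n)\) is finite for \(C^1\)-generic \(G\) arbitrarily close to \(F_\Phi\). This contradiction shows no neighborhood consists of maps conjugate to \(F_\Phi\). Using genericity avoids constructing the perturbation explicitly.

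The main obstacle is making the structural-stability step airtight, namely that some perturbation in every \(C^1\)-neighborhood has only isolated period-\(n\) points. I would invoke the Kupka--Smale theorem, which states that diffeomorphisms with all periodic points hyperbolic are residual, hence dense, in \(\mathrm{Diff}^1\). Hyperbolic periodic points of a fixed period are isolated, and by compactness there are finitely many of them. This leaves only the topological invariance of "the period-\(n\) set contains a nontrivial continuum", which is immediate.
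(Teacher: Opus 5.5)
Your argument is correct. The non-Anosov half is the paper's own argument. You take a noncentral toral periodic point \(p\) of period \(n\); equivariance gives \(F_\Phi^n|_{\mathcal O_p}=\id\), so \(DF_\Phi^n(p)\) has eigenvalue \(1\) on \(T_p\mathcal O_p\), which is incompatible with every periodic point of an Anosov map being hyperbolic.

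For structural stability you take a genuinely different route.
\begin{itemize}
\item The paper cites Ma\~n\'e's solution of the \(C^1\) stability conjecture: structurally stable implies Axiom~A, hence every periodic point is hyperbolic. The non-hyperbolic point \(p\) then gives the contradiction.
\item You use only the Kupka--Smale theorem together with one observation. A conjugacy \(h\) with \(h\circ F_\Phi=G\circ h\) satisfies \(h(\Fix(F_\Phi^n))=\Fix(G^n)\). The left side contains the sphere \(\mathcal O_p\) and is uncountable. For a Kupka--Smale \(G\), which exists in every \(C^1\)-neighborhood by residuality, the right side is compact and discrete, hence finite.
\end{itemize}

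Your route is considerably more elementary than Ma\~n\'e's theorem, and it proves more. The same argument works in the \(C^r\) topology for every \(r\ge1\), where the stability conjecture is not available. It also rules out \(\Omega\)-stability, since periodic points lie in the nonwandering set. The paper's route is shorter given the citation, and it places the failure inside the Axiom~A and strong-transversality characterization of stable systems.

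One caution: discard the ``concrete'' perturbation supported near the single point \(p\). Such a perturbation leaves most of \(\mathcal O_p\) inside \(\Fix(G^n)\), so it does not produce finitely many period-\(n\) points. The Kupka--Smale genericity argument is what actually carries your proof.
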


\begin{proof}
Choose a noncentral periodic point \(p\) on an invariant maximal torus.
If its period is \(n\), equivariance implies that \(F_\Phi^n\) fixes the
entire conjugation orbit \(\mathcal O_p\cong\Sph^2\) pointwise.  Hence
\[
 D(F_\Phi^n)_p v=v
 \qquad(v\in T_p\mathcal O_p).
\]
The derivative of the return map therefore has eigenvalue \(1\).  The
periodic point is not hyperbolic, whereas every periodic point of an
Anosov diffeomorphism is hyperbolic.  Thus \(F_\Phi\) is not Anosov.

By Ma\~n\'e's proof of the \(C^1\)-stability conjecture
\cite{Mane}, a \(C^1\)-structurally stable diffeomorphism satisfies
Axiom~A and the strong transversality condition.  Here Axiom~A means that the nonwandering
set is hyperbolic and that its periodic points are dense in it; in
particular all periodic points are hyperbolic.  The same periodic sphere therefore obstructs
structural stability.
\end{proof}

This does not diminish the hyperbolicity present in the construction.  It
shows instead that the ambient dynamics carries an unavoidable compact
symmetry and should be studied as an equivariant extension of an Anosov
system.

\section{Character varieties and reduced dynamics}

We first explain why the free-group automorphisms used above define maps
on the character variety.  Strictly speaking, it is
\(\Aut(F_2)\), rather than \(F_2\) itself, that acts on the
representation space.  A point of
\(\Hom(F_2,\SU(2))\) is a homomorphism \(\rho:F_2\to\SU(2)\).
For \(\Phi\in\Aut(F_2)\), define
\begin{equation}\label{eq:precompositionaction}
 \Phi^*(\rho):=\rho\circ\Phi.
\end{equation}
Under the identification
\[
 \Hom(F_2,\SU(2))\longrightarrow\SU(2)^2,
 \qquad \rho\longmapsto(\rho(x),\rho(y)),
\]
the map \(\Phi^*\) is precisely the word-evaluation map \(F_\Phi\):
\[
 \Phi^*(\rho)\longleftrightarrow
 \bigl(\rho(\Phi(x)),\rho(\Phi(y))\bigr)=F_\Phi(q_1,q_2).
\]
This is a right action with our convention, since
\((\Phi\circ\Psi)^*=\Psi^*\circ\Phi^*\).

The group \(\SU(2)\) acts on representations by conjugation,
\[
 (h\cdot\rho)(\gamma)=h\rho(\gamma)h^{-1}.
\]
Precomposition commutes with this action:
\[
 \Phi^*(h\cdot\rho)(\gamma)
 =h\rho(\Phi(\gamma))h^{-1}
 =\bigl(h\cdot\Phi^*(\rho)\bigr)(\gamma).
\]
Consequently \(\Phi^*\) sends simultaneous-conjugacy orbits to
simultaneous-conjugacy orbits and therefore descends to a well-defined
map
\begin{equation}\label{eq:characterdescent}
 \overline F_\Phi:\mathcal X_{\SU(2)}(F_2)
 \longrightarrow\mathcal X_{\SU(2)}(F_2),
 \qquad [\rho]\longmapsto[\rho\circ\Phi].
\end{equation}
Indeed, replacing \(\rho\) by the conjugate representation
\(h\cdot\rho\) replaces \(\rho\circ\Phi\) by the conjugate
representation \(h\cdot(\rho\circ\Phi)\), so the class on the right of
\eqref{eq:characterdescent} is independent of the representative.

There is a further simplification.  If \(\iota_w\) is the inner
automorphism \(\gamma\mapsto w\gamma w^{-1}\), then
\[
 \rho\circ\iota_w
 =\rho(w)\,\rho(\,\cdot\,)\,\rho(w)^{-1},
\]
so \(\iota_w\) acts trivially on the character variety.  Hence the
\(\Aut(F_2)\)-action factors through
\[
 \Out(F_2)=\Aut(F_2)/\operatorname{Inn}(F_2)\cong\mathrm{GL}(2,\Z).
\]
Thus a matrix \(A\in\mathrm{GL}(2,\Z)\) determines a canonical reduced map on
the character variety even though its Nielsen lift to \(\SU(2)^2\) is
not unique.

The \(\SU(2)\)-character variety of \(F_2\) is the orbit space
\[
 \mathcal X_{\SU(2)}(F_2)
 :=\Hom(F_2,\SU(2))/\SU(2)
 \cong\SU(2)^2/\SU(2),
\]
where \(\SU(2)\) acts by simultaneous conjugation.  Its points are thus
conjugacy classes \([q_1,q_2]\) of representations of \(F_2\).
Normalized trace coordinates are
\[
 x=\tfrac12\tr(q_1),\qquad
 y=\tfrac12\tr(q_2),\qquad
 z=\tfrac12\tr(q_1q_2).
\]
They identify the character variety with the compact region
(see, for example, \cite{Goldman,GoldmanMcShaneStantchevTan})
\begin{equation}\label{eq:characterball}
 \mathcal B=
 \{(x,y,z)\in[-1,1]^3:
 x^2+y^2+z^2-2xyz\leq1\},
\end{equation}
which is topologically a closed three-ball.  Define the \emph{Fricke
function}
\begin{equation}\label{eq:fricke}
 \Delta(x,y,z)=1-x^2-y^2-z^2+2xyz.
\end{equation}
The boundary \(\partial\mathcal B=\{\Delta=0\}\) consists exactly of
characters of commuting pairs.  The terminology comes from the Fricke
trace identity
\[
 \tr[q_1,q_2]=2-4\Delta(x,y,z),
 \qquad [q_1,q_2]=q_1q_2q_1^{-1}q_2^{-1}.
\]

By Proposition~\ref{prop:nonunique}, the induced map on
\(\mathcal B\) depends only on the matrix \(A\), not on the chosen
Nielsen lift.  For the Fibonacci cat map \eqref{eq:catmap}, direct use of
\(\tr(UV)+\tr(UV^{-1})=\tr(U)\tr(V)\) gives the polynomial automorphism
\begin{equation}\label{eq:reducedcat}
 T(x,y,z)=\bigl(2xz-y,\ z,\ 4xz^2-2yz-x\bigr).
\end{equation}

\begin{proposition}\label{prop:frickeinvariant}
The reduced map \(T\) preserves the Fricke function:
\[
 \Delta\circ T=\Delta.
\]
Consequently every level surface
\[
 \mathcal S_\delta=\{\Delta=\delta\},\qquad0\leq\delta\leq1,
\]
is invariant.
\end{proposition}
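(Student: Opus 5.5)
The cleanest route is conceptual rather than computational.

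By the Fricke trace identity,
\[
 \Delta(x,y,z)=\tfrac14\bigl(2-\tr[q_1,q_2]\bigr).
\]
So \(\Delta\circ T=\Delta\) amounts to saying that the trace of the commutator is unchanged by \(F\) of \eqref{eq:catmap}. That is, I would show
\[
 \tr\bigl[q_1^2q_2,\;q_1q_2\bigr]=\tr[q_1,q_2]\quad\text{for all }(q_1,q_2)\in\SU(2)^2.
\]

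First I compute the commutator of the image pair with \(u=q_1^2q_2\) and \(v=q_1q_2\). Then
\[
 uvu^{-1}v^{-1}
 =q_1^2q_2\,q_1q_2\,q_2^{-1}q_1^{-2}\,q_2^{-1}q_1^{-1}
 =q_1^2q_2q_1^{-1}q_2^{-1}q_1^{-1}
 =q_1\,[q_1,q_2]\,q_1^{-1}.
\]
So the new commutator is a conjugate of the old one, and its trace is unchanged.

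More generally, any \(\Phi\in\Aut(F_2)\) sends \([x,y]\) to a conjugate of \([x,y]^{\pm1}\). This is Nielsen's classical theorem. Since \(\tr(g^{-1})=\tr g\) in \(\SU(2)\), the identity \(\tr[\,\cdot\,,\cdot\,]\circ F_\Phi=\tr[\,\cdot\,,\cdot\,]\) holds for every lift. In the present example the explicit check above makes that citation unnecessary.

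Second, \(T\) is by construction the map induced on trace coordinates by \(F\). Equivalently, \(T\) is \(\overline F_\Phi\) written in the coordinates \((x,y,z)\), as in \eqref{eq:characterdescent} and \eqref{eq:reducedcat}. Hence for every point \((x,y,z)\in\mathcal B\), realized by some pair \((q_1,q_2)\),
\[
 \Delta(T(x,y,z))=\tfrac14\bigl(2-\tr[F(q_1,q_2)]\bigr)=\tfrac14\bigl(2-\tr[q_1,q_2]\bigr)=\Delta(x,y,z).
\]
This uses surjectivity of the trace map \(\SU(2)^2\to\mathcal B\).

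Third, to get the identity on all of \(\R^3\) as polynomials, not only on \(\mathcal B\), I would note that \(\Delta\circ T-\Delta\) is a polynomial vanishing on \(\mathcal B\). Since \(\mathcal B\) has nonempty interior, the polynomial is identically zero. Alternatively, one can verify it by direct expansion. With \(X=2xz-y\) and \(Z=4xz^2-2yz-x=2XZ'-x\) where \(Z'=z\), the substitution is a composition of two Markov-type moves, \((x,y,z)\mapsto(x,z,2xz-y)\) followed by a permutation and a similar move. The function \(\Delta\) is visibly invariant under \(y\mapsto 2xz-y\), because it is quadratic in \(y\) with roots summing to \(2xz\).

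The invariance of each level set \(\mathcal S_\delta\) is then immediate, because \(T\) maps \(\mathcal S_\delta\) into itself. Since \(T\) is invertible (its inverse is induced by \(F^{-1}\)), it maps \(\mathcal S_\delta\) onto itself. The range \(0\le\delta\le1\) is exactly the range of \(\Delta\) on \(\mathcal B\).

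The only step needing care is the first one: confirming that \(T\) really is the trace-coordinate expression of \(F\). The conjugation identity itself is a one-line cancellation.
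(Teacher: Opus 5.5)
Your proposal is correct and follows essentially the paper's own argument. The paper also derives $\Delta\circ T=\Delta$ from the Fricke identity $\tr[q_1,q_2]=2-4\Delta$ and the fact that an orientation-preserving automorphism of $F_2$ sends $[x,y]$ to a conjugate of itself, with direct substitution offered as an alternative; your explicit cancellation $[q_1^2q_2,\,q_1q_2]=q_1[q_1,q_2]q_1^{-1}$, the surjectivity of trace coordinates onto $\mathcal B$, and the Vieta-move check only fill in details the paper leaves implicit.
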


\begin{proof}
The identity follows either by substituting \eqref{eq:reducedcat} into
\eqref{eq:fricke}, or conceptually from the commutator trace.  An
orientation-preserving automorphism of \(F_2\) sends \([x,y]\) to a
conjugate of itself, so its trace is unchanged.  The nonconstant first
integral \(\Delta\) supplies an invariant transverse direction and rules
out global Anosov hyperbolicity on the three-ball.
\end{proof}

The boundary \(\mathcal S_0\) is the \emph{pillowcase}: the quotient
\[
 \T^2/\{\xi\sim-\xi\}.
\]
It is topologically a two-sphere with four orbifold points, the images of
the four fixed points of the involution \(\xi\mapsto-\xi\).  The cat
matrix commutes with this involution and therefore induces a map on the
pillowcase.

\begin{theorem}\label{thm:pillowcase}
The restriction of \(T\) to \(\partial\mathcal B\) is the pillowcase
quotient of the classical cat map
\[
 A=\begin{pmatrix}2&1\\1&1\end{pmatrix}:\T^2\longrightarrow\T^2.
\]
It has topological entropy
\[
 \log\left(\frac{3+\sqrt5}{2}\right)
\]
and dense periodic points.  In the orbifold sense it is pseudo-Anosov:
its stable and unstable measured foliations are the quotients of the
linear eigendirections of \(A\).
\end{theorem}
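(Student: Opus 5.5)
The plan is to identify \(\partial\mathcal B\) with the pillowcase through trace coordinates, verify that \(T\) becomes the map induced by \(A\) on \(\T^2/\{\xi\sim-\xi\}\), and then transfer entropy, periodic points, and foliations from the torus.

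\textbf{Step 1: parametrize the boundary.} By the discussion after \eqref{eq:fricke}, \(\partial\mathcal B=\{\Delta=0\}\) consists of characters of commuting pairs. Every commuting pair is \((e^{u\theta},e^{u\varphi})\), as in Section~\ref{subsec:pillowcase-fibration}. Its normalized trace coordinates are
\[
 (x,y,z)=(\cos\theta,\cos\varphi,\cos(\theta+\varphi)).
\]
This gives a map \(\kappa:\T^2\to\partial\mathcal B\), independent of \(u\). Since \(\cos\) is even, \(\kappa\) is invariant under \(\xi\mapsto-\xi\). I would show that the induced map \(\mathcal P\to\partial\mathcal B\) is a homeomorphism. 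Injectivity comes from the fact that \((\cos\theta,\cos\varphi)\) fixes \(\theta,\varphi\) up to independent signs, and \(\cos(\theta+\varphi)\) distinguishes \((\theta,\varphi)\) from \((\theta,-\varphi)\) except when \(\sin\theta\sin\varphi=0\), where the two classes already coincide in \(\mathcal P\). Surjectivity follows from the description of \(\partial\mathcal B\) as commuting characters. A compact-to-Hausdorff bijection is then a homeomorphism. Equivalently, \(\pi_{\mathcal C}\) of Proposition~\ref{prop:pillowcase-fibration} composed with this identification is the trace map.

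\textbf{Step 2: conjugate \(T\) to \(A\).} By Theorem~\ref{thm:torusrestriction}, \(F\) maps \(\T^2_I\) to itself by \(\xi\mapsto A\xi\), with \(A\xi=(2\theta+\varphi,\theta+\varphi)\). By construction \(T\) is the map on characters induced by \(F\), so
\[
 T\circ\kappa=\kappa\circ A.
\]
As a sanity check against \eqref{eq:reducedcat}: the second coordinate of \(T\) is \(z=\cos(\theta+\varphi)\), as required. The first coordinate is \(2xz-y=2\cos\theta\cos(\theta+\varphi)-\cos\varphi=\cos(2\theta+\varphi)\). The third coordinate follows similarly. Since \(A\) is linear, it commutes with \(-I\), so \(T|_{\partial\mathcal B}\) is exactly the pillowcase quotient.

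\textbf{Step 3: entropy and periodic points.} The projection \(\T^2\to\mathcal P\) is a finite-to-one factor map (at most two-to-one). By Bowen's theorem, entropy is then unchanged, giving \(\log\lambda_+\). Periodic points of \(A\) are dense in \(\T^2\), and their images are periodic and dense in \(\mathcal P\).

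\textbf{Step 4: the pseudo-Anosov structure.} This is the step needing the most care, since it is about the orbifold points. The eigenlines of \(A\) give linear foliations of \(\T^2\) with transverse measures \(|dx_s|\) and \(|dx_u|\). These are invariant under \(-I\), because the involution maps lines to parallel lines and preserves \(|\cdot|\). They therefore descend to measured foliations of \(\mathcal P\), and \(T\) multiplies the measures by \(\lambda_\pm\). Away from \(p_1,\dots,p_4\) the covering \(\rho\) is a local diffeomorphism, so the foliations are regular there. At each \(p_i\), the quotient of a linear foliation by rotation by \(\pi\) is a one-prong singularity. This is exactly the allowed singularity type for a pseudo-Anosov map on the pillowcase orbifold, and it is what "in the orbifold sense" should mean.
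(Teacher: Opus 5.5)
Your proposal is correct and follows the same route as the paper: identify $\partial\mathcal B$ with conjugacy classes of commuting pairs, i.e.\ with $\T^2/\{\xi\sim-\xi\}$. Then use Theorem~\ref{thm:torusrestriction} to see that the induced map is the quotient of $A$, and pass entropy and density of periodic points through the finite quotient. You also supply details the paper leaves implicit, all of them sound: the injectivity of the trace map $\kappa$ on the pillowcase, the finite-to-one factor argument giving equality of entropies, and the one-prong structure of the descended foliations at the four orbifold points.
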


\begin{proof}
A commuting pair is conjugate to
\((e^{I\theta},e^{I\varphi})\).  Replacing \(I\) by \(-I\) identifies
\((\theta,\varphi)\) with \((-\theta,-\varphi)\), producing the
pillowcase quotient.  The restriction of the lift is the cat map by
Theorem~\ref{thm:torusrestriction}.  Entropy and density of periodic
points pass through the finite quotient.
\end{proof}

The full character-ball dynamics is not uniformly hyperbolic.  For example, the
fixed-point equations for \eqref{eq:reducedcat} contain the curve
\[
 p_t=\left(\frac{t}{2t-1},t,t\right)
\]
whenever this point belongs to \(\mathcal B\).  At \(p_0=(0,0,0)\), the
derivative has eigenvalues
\[
 1,\qquad e^{2\pi i/3},\qquad e^{-2\pi i/3},
\]
so the tangent dynamics on the corresponding invariant character surface
is elliptic rather than hyperbolic.

\begin{question}
What are the ergodic and entropy properties of \(T|_{\mathcal S_\delta}\)
for \(0<\delta<1\)?  In particular, which levels contain elliptic
islands, positive-entropy invariant sets, or nonuniformly hyperbolic
components?
\end{question}

\begin{question}
What is the exact topological entropy of a quaternionic lift upstairs?
Does it always equal \(\log\lambda_A\), or can the noncommuting directions
create additional entropy?
\end{question}

\begin{question}
Can one select a dynamically preferred representative of the canonical
outer automorphism class \([\Phi_A]\), despite the nonuniqueness of the
lift on \(\SU(2)^2\)?
\end{question}

\section{Homological growth and stable/unstable eigencurrents}
\label{sec:eigencurrent}

We conclude with a homological question suggested by the analogy with
Anosov diffeomorphisms.  Let
\[
 C_1=S^3\times\{1\},\qquad C_2=\{1\}\times S^3,
\]
with their standard orientations.  Their fundamental classes form the
natural basis of
\[
 H_3(S^3\times S^3;\mathbb Z)\cong\mathbb Z^2.
\]
If \(F_A\) is a Nielsen lift of \(A\in\mathrm{SL}(2,\mathbb Z)\), then
\[
 (F_A)_*=A
 \quad\text{on }H_3(S^3\times S^3;\mathbb Z)
\]
with respect to this basis.  Indeed, if the two defining words have exponent-sum rows
\((a,b)\) and \((c,d)\), then
\[
 F_A(q,1)=(q^a,q^c),\qquad F_A(1,q)=(q^b,q^d),
\]
and the power map \(q\mapsto q^m\) has degree \(m\) on \(S^3\).

When \(A\) is hyperbolic, this elementary homological observation has a
geometric refinement.  Forward iterates of the coordinate cycles, after
normalization by the expanding eigenvalue, converge as de Rham currents
to a canonical unstable eigen-current supported on the commuting locus.
Applying the same construction to the inverse map gives a stable
eigen-current.  These currents are not fixed pointwise by push-forward;
rather, their one-dimensional rays are invariant and their magnitudes
scale exponentially.

Throughout this section, currents are understood in the weak de Rham
sense: a sequence of \(k\)-currents converges if it converges on every
smooth \(k\)-form.

\begin{theorem}[Unstable eigencurrent]
\label{thm:unstable-current}
Let \(A\in\mathrm{SL}(2,\mathbb Z)\) be hyperbolic.  Let \(\mu_u\) be
its unstable eigenvalue, so that \(|\mu_u|>1\), and choose an unstable
eigenvector \(v^u\in\mathbb R^2\) with \(Av^u=\mu_u v^u\).
Let \(F_A\) be any Nielsen lift of \(A\).  For \(i=1,2\), write
\[
 \mu_u^{-n}A^ne_i\longrightarrow c_i v^u.
\]
Then there exists a nonzero closed normal \(3\)-current of finite mass
\(\mathcal T^u\), supported on the commuting locus \(\mathcal C\), such that
\[
 \mu_u^{-n}(F_A^n)_\#[C_i]
 \longrightarrow c_i\mathcal T^u
\]
weakly as de Rham currents.  Moreover,
\[
 (F_A)_\#\mathcal T^u=\mu_u\mathcal T^u,
\]
and, after normalizing \(v^u\), the homology class of
\(\mathcal T^u\) is the unstable eigenvector \(v^u\) in
\(H_3(S^3\times S^3;\mathbb R)\cong\mathbb R^2\).
\end{theorem}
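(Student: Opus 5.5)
The plan is to exploit the fact that every iterate of a coordinate cycle is a \emph{power map} into the commuting locus, and then to use the pillowcase parametrization of Proposition~\ref{prop:pillowcase-fibration}. This turns the problem into equidistribution of long linear segments on \(\T^2\).

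\textbf{Step 1: iterates of the coordinate cycles.} Since \(F_A^n=F_{\Phi^{(n)}}\) is a word map, restricting to \(\{(q,1)\}\) kills every occurrence of \(y\). So \(F_A^n(q,1)=(q^{a_n},q^{c_n})\), where \((a_n,c_n)^t=A^ne_1\) by Theorem~\ref{thm:torusrestriction}. The case of \(C_2\) is the same with \(A^ne_2\). In particular \((F_A^n)_\#[C_i]\) is supported in \(\mathcal C\), and it does not depend on the choice of lift.

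\textbf{Step 2: write the iterates through \(\Psi\).} Write \(q=e^{ut}\) with \(u\in S^2\) and \(t\in[0,\pi]\); this is a diffeomorphism off a null set and is orientation-compatible up to a fixed sign. Then
\[
 (F_A^n)_\#[C_i]=\Psi_\#\bigl([S^2]\times\sigma_n\bigr),
\]
where \(\Psi\) is the map of Proposition~\ref{prop:pillowcase-fibration} and \(\sigma_n\) is the oriented segment \(t\mapsto tA^ne_i\), \(t\in[0,\pi]\), in \(\T^2\).

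For a smooth \(3\)-form \(\omega\), let \(\alpha=\int_{S^2}\Psi^*\omega\) be the fibre integral. This is a smooth \(1\)-form on \(\T^2\), and
\[
 \mu_u^{-n}\bigl\langle (F_A^n)_\#[C_i],\omega\bigr\rangle
 =\mu_u^{-n}\int_0^\pi \alpha_{tA^ne_i}(A^ne_i)\,dt .
\]
Set \(L_n=|A^ne_i|\), so the segment \(\sigma_n\) has length \(\pi L_n\) and direction \(A^ne_i/L_n\to \pm v^u/|v^u|\).

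\textbf{Step 3: equidistribution (the main obstacle).} The key claim is that a linear segment whose length tends to infinity and whose direction converges to an irrational direction equidistributes on \(\T^2\). The unstable direction of a hyperbolic \(A\in\mathrm{SL}(2,\Z)\) is irrational, so the claim applies here.

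I would prove it with Fourier modes. For \(k\neq0\), the average of \(e^{i\langle k,\xi\rangle}\) along \(\sigma_n\) is \(O\bigl(1/(L_n|\langle k,A^ne_i\rangle/L_n|)\bigr)\), and \(|\langle k,v^u\rangle|>0\). For a fixed trigonometric polynomial this bound is uniform in \(n\), and density of trigonometric polynomials in \(C^0\) finishes the claim.

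Granting it, and using \(\mu_u^{-n}A^ne_i\to c_iv^u\),
\[
 \mu_u^{-n}\bigl\langle (F_A^n)_\#[C_i],\omega\bigr\rangle
 \longrightarrow
 c_i\,\frac{\pi}{4\pi^2}\int_{\T^2}\alpha(v^u)\,d\theta
 =:c_i\,\mathcal T^u(\omega).
\]
In words, \(\mathcal T^u=\Psi_\#\bigl([S^2]\times \tfrac{1}{4\pi}v^u\,d\mathrm{Leb}_{\T^2}\bigr)\), with the factor \(\tfrac12\) reflecting the Weyl identification. Two further points need checking: that the degeneracy of \(\Psi\) at the four central pairs and at \(t=0,\pi\) contributes nothing, and that \(\alpha\) is smooth, or at least continuous, which suffices.

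\textbf{Step 4: properties of the limit.}
\begin{itemize}
\item \emph{Closed.} A weak limit of closed currents is closed.
\item \emph{Finite mass.} The mass of \(\mu_u^{-n}(F_A^n)_\#[C_i]\) is at most \(C\,L_n/|\mu_u|^n\), which is bounded. Lower semicontinuity of mass then gives finite mass, so \(\mathcal T^u\) is normal.
\item \emph{Support.} \(\mathcal T^u\) is supported on \(\mathcal C\) by Step 1.
\item \emph{Eigenvalue equation.} On \(\mathcal C\) we have \(F_A\circ\Psi=\Psi\circ(\mathrm{id}\times A)\), and \(A_\#(v^u\,d\mathrm{Leb})=\mu_u\,v^u\,d\mathrm{Leb}\) because \(A\) preserves Lebesgue measure. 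Hence \((F_A)_\#\mathcal T^u=\mu_u\mathcal T^u\). Alternatively, apply \((F_A)_\#\) to the defining limit and shift the index.
\item \emph{Nonvanishing and homology class.} Pairing with the closed forms \(\pi_j^*\nu\) commutes with the limit. Since \((F_A)_*=A\) on \(H_3\), the class of the limit is \(\lim\mu_u^{-n}A^ne_i=c_iv^u\), so \([\mathcal T^u]=v^u\) after normalizing. In particular \(\mathcal T^u\neq0\).
\end{itemize}
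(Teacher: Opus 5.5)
Your proposal is correct and follows essentially the paper's route. You reduce \((F_A^n)_\#[C_i]\) to the power map \(q\mapsto(q^m,q^\ell)\), write it through \(\Psi\) as a sphere family over a linear curve in \(\mathbb{T}^2\), equidistribute that curve by Fourier analysis, and obtain the eigen-equation from \(F_A\circ\Psi=\Psi\circ(\mathrm{id}\times A)\); your limit \(\frac{1}{4\pi}\int_{\mathbb{T}^2}\alpha(v^u)\) is exactly the paper's \(\frac12\Psi_\#([S^2]\times\mathcal R_{v^u})\).

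The differences are minor. You use the half-parametrization \(S^2\times[0,\pi]\) and open segments, whose endpoints lie in \(\mathbb{T}^2[2]\) where \(\Psi\) collapses the sphere, and a Weyl-criterion argument resting on \(\langle k,v^u\rangle\neq0\). The paper instead uses the degree-two map \(E\) on \(S^2\times S^1\), closed geodesics, and a resonance estimate resting on primitivity of \(A^ne_i\), which also gives a quantitative error \(O(\|v\|^{1-N})\).
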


The proof is based on an exact reduction to long rational geodesics on
the two-torus.

\subsection{Exact parametrization of the iterated cycles}

Let \(\Phi\in\operatorname{Aut}(F_2)\) be the automorphism defining the
chosen lift.  The row exponent matrix of \(\Phi^n\) is \(A^n\).  If
\[
 A^ne_i=\binom{m_{i,n}}{\ell_{i,n}},
\]
then setting one free generator equal to the identity makes every word
commutative and leaves only its exponent sum.  Consequently,
\[
 F_A^n(q,1)=\bigl(q^{m_{1,n}},q^{\ell_{1,n}}\bigr),
 \qquad
 F_A^n(1,q)=\bigl(q^{m_{2,n}},q^{\ell_{2,n}}\bigr).
\]
Thus, for \(v=(m,\ell)\in\mathbb Z^2\), if
\[
 \phi_v:S^3\longrightarrow S^3\times S^3,
 \qquad
 \phi_v(q)=(q^m,q^\ell),
\]
then
\[
 (F_A^n)_\#[C_i]=(\phi_{A^ne_i})_\#[S^3].
\]
In particular, every one of these currents is supported on
\(\mathcal C\).

Introduce the smooth maps
\[
 E:S^2\times S^1\longrightarrow S^3,
 \qquad E(u,t)=e^{tu}=\cos t+u\sin t,
\]
and
\[
 \Psi:S^2\times\mathbb T^2\longrightarrow\mathcal C,
 \qquad
 \Psi(u,\theta,\varphi)
   =\bigl(e^{u\theta},e^{u\varphi}\bigr),
\]
where \(S^1=\mathbb R/2\pi\mathbb Z\) and
\(\mathbb T^2=(\mathbb R/2\pi\mathbb Z)^2\).  The map \(E\) has degree
\(2\): away from \(\pm1\), its two inverse images are
\((u,t)\) and \((-u,-t)\), and the involution reverses the orientation
of each factor and hence preserves the product orientation.  Therefore
\[
 E_\#[S^2\times S^1]=2[S^3].
\]
For \(v=(m,\ell)\), define the closed geodesic
\[
 \gamma_v:S^1\longrightarrow\mathbb T^2,
 \qquad \gamma_v(t)=(mt,\ell t).
\]
Since
\[
 \phi_v\circ E
 =\Psi\circ(\operatorname{id}_{S^2}\times\gamma_v),
\]
we obtain the current identity
\begin{equation}
\label{eq:cycle-product-current}
 (\phi_v)_\#[S^3]
 =\frac12\,\Psi_\#\bigl([S^2]\times[\gamma_v]\bigr).
\end{equation}

\subsection{A Fourier lemma for rational geodesics}

For \(w=(w_1,w_2)\in\mathbb R^2\), define the constant
Kronecker current \(\mathcal R_w\) on \(\mathbb T^2\) by
\[
 \mathcal R_w(\alpha)
 =\frac1{2\pi}\int_{\mathbb T^2}
   \bigl(w_1\alpha_1+w_2\alpha_2\bigr)\,d\theta\,d\varphi,
 \qquad
 \alpha=\alpha_1\,d\theta+\alpha_2\,d\varphi.
\]
It is a closed normal $1$-current of finite mass and depends linearly on $w$.

\begin{lemma}
\label{lem:rational-geodesics}
Let \(v_n\in\mathbb Z^2\) be primitive vectors and let \(r_n>0\) satisfy
\[
 r_n^{-1}v_n\longrightarrow w,
 \qquad \|v_n\|\longrightarrow\infty,
\]
where the direction of \(w\) is irrational.  Then
\[
 r_n^{-1}[\gamma_{v_n}]\longrightarrow\mathcal R_w
\]
weakly as \(1\)-currents on \(\mathbb T^2\).
\end{lemma}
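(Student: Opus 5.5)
The plan is to test the currents against smooth $1$-forms and expand them in Fourier series on $\mathbb T^2$. A smooth $1$-form is $\alpha=\alpha_1\,d\theta+\alpha_2\,d\varphi$ with $\alpha_j\in C^\infty(\mathbb T^2)$. Both sides are linear in $\alpha$, so it suffices to treat a single Fourier mode $\alpha_j=e^{i(k_1\theta+k_2\varphi)}$, $k=(k_1,k_2)\in\mathbb Z^2$, and then pass to general $\alpha$ by a uniform mass bound.

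For $v_n=(m,\ell)$ the pullback of $\alpha$ under $\gamma_{v_n}(t)=(mt,\ell t)$ is $(m\alpha_1+\ell\alpha_2)(mt,\ell t)\,dt$. For the single mode $e^{i\langle k,\cdot\rangle}\,d\theta$ this gives
\[
 r_n^{-1}[\gamma_{v_n}]\bigl(e^{i\langle k,\cdot\rangle}d\theta\bigr)
 = r_n^{-1}m\int_0^{2\pi}e^{i\langle k,v_n\rangle t}\,dt ,
\]
which equals $2\pi r_n^{-1}m$ if $\langle k,v_n\rangle=0$ and $0$ otherwise, since $\langle k,v_n\rangle$ is an integer.

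For $k=0$ the value is $2\pi r_n^{-1}m\to 2\pi w_1$. This should match $\mathcal R_w(d\theta)=\frac1{2\pi}\int w_1\,d\theta\,d\varphi=2\pi w_1$, because the area of $\mathbb T^2$ is $4\pi^2$. The $d\varphi$ component works the same way. For $k\neq0$ the target $\mathcal R_w$ gives $0$, since the mode has mean zero.

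The main obstacle is showing that $\langle k,v_n\rangle\ne0$ for all large $n$ when $k\neq 0$ is fixed. Suppose instead that $\langle k,v_n\rangle=0$ along a subsequence. Since $v_n$ is primitive and orthogonal to the fixed integer vector $k$, it equals $\pm k^\perp/\gcd(k)$, so only two possible values occur. This contradicts $\|v_n\|\to\infty$. Alternatively, dividing by $r_n$ gives $\langle k,w\rangle=0$, which contradicts the irrationality of the direction of $w$. Either argument settles it; primitivity is not even essential. Hence each Fourier mode converges.

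To extend from trigonometric polynomials to general smooth $\alpha$, I will use the uniform mass bound
\[
 \mathbf M\bigl(r_n^{-1}[\gamma_{v_n}]\bigr)\le 2\pi r_n^{-1}\|v_n\|,
\]
which is bounded because $r_n^{-1}v_n\to w$. Together with $\mathcal R_w$ having finite mass, this lets me approximate $\alpha$ uniformly by trigonometric polynomials, via Fejér means or the rapid decay of Fourier coefficients, with an $\varepsilon/3$ argument. This yields the weak convergence $r_n^{-1}[\gamma_{v_n}]\to\mathcal R_w$.
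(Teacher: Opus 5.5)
Your proof is correct, but it handles the nonzero Fourier modes differently from the paper.

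\textbf{The paper's route.} It also expands $\alpha$ in Fourier series, but it controls all nonzero modes at once, quantitatively. Because $v$ is primitive, every nonzero resonant frequency $k$ (one with $\langle k,v\rangle=0$) is an integer multiple of $(-v_2,v_1)$, so $\|k\|\ge\|v\|$. Rapid decay of the Fourier coefficients of a smooth form then bounds the total resonant contribution by $O_N(\|v\|^{1-N})$. This error vanishes after division by $r_n\asymp\|v_n\|$.

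\textbf{Your route.} You argue qualitatively, one mode at a time. Each fixed $k\neq0$ is eventually nonresonant, by primitivity together with $\|v_n\|\to\infty$, or alternatively by irrationality of the direction of $w$. The uniform mass bound $2\pi r_n^{-1}\|v_n\|$ and uniform density of trigonometric polynomials then finish the argument via an $\varepsilon/3$ step. Your normalization check $\mathcal R_w(d\theta)=2\pi w_1$ is also correct.

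\textbf{What each buys.} The paper's argument gives an explicit rate of convergence, $O_N(\|v_n\|^{-N})$ for each smooth test form. In exchange it genuinely needs primitivity and smoothness, and it never uses the irrationality of $w$. Your argument gives no rate, but it proves convergence against all continuous $1$-forms, which is a stronger, measure-level form of weak convergence. Through your second argument, it also dispenses with primitivity entirely, using the irrational direction of $w$ instead.
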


\begin{proof}
For a smooth one-form
\(\alpha=\alpha_1d\theta+\alpha_2d\varphi\), expand its coefficients in
Fourier series,
\[
 \alpha_j(x)=\sum_{k\in\mathbb Z^2}\widehat\alpha_j(k)e^{i\langle k,x\rangle}.
\]
Evaluation on \(\gamma_v\) gives
\[
 [\gamma_v](\alpha)
 =\int_0^{2\pi}\langle v,\alpha(tv)\rangle\,dt.
\]
The Fourier mode indexed by \(k\) integrates to zero unless
\(\langle k,v\rangle=0\).  The zero mode contributes
\[
 2\pi\langle v,\widehat\alpha(0)\rangle
 =\frac1{2\pi}\int_{\mathbb T^2}
   \langle v,\alpha\rangle\,d\theta\,d\varphi.
\]
Because \(v\) is primitive, every nonzero resonant vector is an integer
multiple of \((-v_2,v_1)\), and therefore has norm at least \(\|v\|\).
The rapid decay of the Fourier coefficients of a smooth form shows that,
for every \(N\), the sum of all nonzero resonant contributions is
\(O_N(\|v\|^{1-N})\).  After division by \(r_n\asymp\|v_n\|\), this
error tends to zero, while the zero mode converges to
\(\mathcal R_w(\alpha)\).  This proves the assertion.
\end{proof}

\subsection{Proof of Theorem~\ref{thm:unstable-current}}

The vectors \(A^ne_i\) are primitive because \(A^n\) is unimodular, and
\[
 \mu_u^{-n}A^ne_i\longrightarrow c_i v^u.
\]
The unstable slope of an integral hyperbolic matrix is irrational.
Lemma~\ref{lem:rational-geodesics} therefore gives
\[
 \mu_u^{-n}[\gamma_{A^ne_i}]
 \longrightarrow c_i\mathcal R_{v^u}.
\]
Define
\begin{equation}
\label{eq:unstable-current-definition}
 \mathcal T^u
 :=\frac12\,\Psi_\#
 \bigl([S^2]\times\mathcal R_{v^u}\bigr).
\end{equation}
Combining this definition with
\eqref{eq:cycle-product-current} proves
\[
 \mu_u^{-n}(F_A^n)_\#[C_i]
 \longrightarrow c_i\mathcal T^u.
\]
The current $\mathcal R_{v^u}$ has finite mass and zero boundary on the compact torus.  Hence
$[S^2]\times\mathcal R_{v^u}$ is a normal current of finite mass.  Since
$\Psi$ is smooth on the compact manifold $S^2\times\mathbb T^2$, its
push-forward $\mathcal T^u$ is again normal and of finite mass; in
particular $\partial\mathcal T^u=0$.  See \cite{Federer}.

On the commuting locus, every Nielsen lift acts in angular coordinates
by \(A\):
\[
 F_A\circ\Psi
 =\Psi\circ(\operatorname{id}_{S^2}\times A).
\]
Consequently,
\[
 (F_A)_\#\mathcal T^u
 =\frac12\Psi_\#
   \bigl([S^2]\times A_\#\mathcal R_{v^u}\bigr)
 =\frac12\Psi_\#
   \bigl([S^2]\times\mathcal R_{Av^u}\bigr)
 =\mu_u\mathcal T^u.
\]
Finally,
\[
 [(F_A^n)_\#[C_i]]=A^ne_i.
\]
Pairing the weak current limit with closed \(3\)-forms gives
\([\mathcal T^u]=v^u\) after normalization.  In particular,
\(\mathcal T^u\neq0\).
\qed

\begin{remark}
The proof yields convergence of the entire normalized sequence, rather
than only subsequential convergence from a compactness theorem.  The
limit current is normal and has finite mass by the explicit push-forward
construction above.  Questions about its exact mass, rectifiability, and
finer regularity remain open.
\end{remark}

\begin{remark}[Interpretation]
The current \(\mathcal R_{v^u}\) is the invariant current of the
irrational Kronecker foliation in the unstable direction on the torus.
Taking its product with the conjugacy-sphere current and pushing forward
by \(\Psi\) produces a three-dimensional measured current on
\(\mathcal C\).  It is therefore the natural current associated with this measured
lamination in the sense of Ruelle--Sullivan \cite{RuelleSullivan}.
\end{remark}

\subsection{Periodic sphere fibres, measured laminations, and their currents}
\label{subsec:lamination-current-geometry}

The geometric meaning of the eigencurrents is best expressed through the
sphere decomposition of the commuting locus.  Recall that over the regular
pillowcase
\[
 \mathcal P^\circ
 =\bigl(\mathbb T^2\setminus\mathbb T^2[2]\bigr)/\{\xi\sim-\xi\}
 \cong S^2\setminus\{p_1,p_2,p_3,p_4\},
\]
the regular commuting locus is an $S^2$-bundle
\[
 S^2\longrightarrow \mathcal C_{\mathrm{reg}}
 \xrightarrow{\ \pi_{\mathcal C}\ }\mathcal P^\circ.
\]
For $x=[\theta,\varphi]\in\mathcal P^\circ$, the fibre is the simultaneous
conjugacy orbit
\[
 \Sigma_x=\pi_{\mathcal C}^{-1}(x)
 =\left\{\bigl(e^{u\theta},e^{u\varphi}\bigr):u\in S^2\right\}
 \cong S^2.
\]
These fibres are precisely the conjugacy spheres discussed earlier.  There is
one such sphere for every regular point $x$ of the pillowcase, so the full
sphere decomposition of $\mathcal C_{\mathrm{reg}}$ is uncountable.  A fibre $\Sigma_x$ is periodic as a set if and only if $x$ is
periodic for the induced pillowcase map $\overline A$.  Since, for every
$n\geq1$, the set $\operatorname{Fix}(\overline A^{\,n})$ is finite, the
periodic sphere fibres form a countable family.  This family is dense in the
commuting locus because periodic points of the hyperbolic toral automorphism,
and hence of its pillowcase quotient, are dense.

If $x=[\xi]$ has period $n$ on the pillowcase, then
$A^n\xi\equiv\xi$ or $A^n\xi\equiv-\xi$ modulo $2\pi\mathbb Z^2$.
Accordingly, $F_A^n$ restricts to either the identity or the antipodal map on
$\Sigma_x$.  In both cases
\[
 F_A(\Sigma_x)=\Sigma_{\overline A x},
 \qquad
 F_A^{2n}|_{\Sigma_x}=\operatorname{id}_{\Sigma_x}.
\]
Thus a periodic pillowcase orbit gives a finite cycle of sphere fibres, and
some return iterate fixes each sphere pointwise.  If one starts with an actual periodic point of the toral map, the
corresponding return is the identity, as in the periodic conjugacy spheres
constructed earlier.  The periodic spheres form a countable dense subfamily, whereas the
eigencurrents below are three-dimensional objects obtained by sweeping the
sphere fibres along one-dimensional stable or unstable directions in the base.

Let $\mathcal F^u$ and $\mathcal F^s$ denote the irrational Kronecker
foliations of $\mathbb T^2$ determined by the eigenvectors $v^u$ and $v^s$.
They descend, away from the four branch points, to measured one-dimensional
laminations of the punctured pillowcase.  Pulling them back by
$\pi_{\mathcal C}$ gives measured three-dimensional laminations on
$\mathcal C_{\mathrm{reg}}$:
\[
 \mathcal L^u:=\pi_{\mathcal C}^{-1}(\mathcal F^u),
 \qquad
 \mathcal L^s:=\pi_{\mathcal C}^{-1}(\mathcal F^s).
\]
Locally, a leaf of either lamination has the form
\[
 S^2\times\mathbb R,
\]
with two tangent directions along a conjugacy sphere and one tangent direction
along a stable or unstable line in the pillowcase.  Globally, one should regard
these as measured laminations on the regular stratum; the associated currents
extend across the four singular central pairs by the push-forward formula
through $\Psi$.  Around a puncture, the antipodal monodromy reverses the
orientation of the $S^2$ fibre and also reverses the lifted one-dimensional
base direction.  Their product therefore preserves the orientation of the
three-dimensional lamination, which is compatible with the global current.

These laminations belong entirely to the four-dimensional regular commuting
locus.  They are not asserted to define invariant subbundles of the ambient
six-dimensional tangent bundle $T(\SU(2)^2)$; their role here is purely
current-theoretic.

Periodic two-spheres are not leaves of these three-dimensional laminations.
A conjugacy sphere is a fibre and therefore a codimension-one slice inside a
stable or unstable leaf.  More precisely, in a
local trivialization
\[
 \mathcal C_{\mathrm{reg}}\simeq S^2\times U,
 \qquad U\subset\mathbb R^2,
\]
choose local coordinates $(u,s)$ on $U$ tangent respectively to the unstable
and stable directions.  Then
\[
 \Sigma_{(u,s)}=S^2\times\{(u,s)\},
\]
whereas the corresponding local leaves are
\[
 \mathcal L^u_s=S^2\times\mathbb R_u\times\{s\},
 \qquad
 \mathcal L^s_u=S^2\times\{u\}\times\mathbb R_s.
\]
Thus a fibre $\Sigma_x$ is locally the transverse intersection of the stable
and unstable three-leaves through it:
\[
 \Sigma_x=\mathcal L^u(x)\cap\mathcal L^s(x).
\]
For a periodic sphere fibre, choose a return iterate $N$ that fixes the
sphere pointwise; one may take $N=n$ in the identity case above and $N=2n$ in
the antipodal case.  Then the derivative is the identity on the two fibre
directions, while the remaining tangent direction of the unstable leaf is
expanded in norm by $|\mu_u|^N$ and the stable direction is contracted in norm
by $|\mu_u|^{-N}$.  Schematically, at $p\in\Sigma_x$,
\[
 T_p\mathcal L^u=T_p\Sigma_x\oplus E^u_{\mathrm{base}},
 \qquad
 T_p\mathcal L^s=T_p\Sigma_x\oplus E^s_{\mathrm{base}}.
\]
The periodic spheres therefore describe neutral fibre directions, whereas the
eigencurrents add one expanding or contracting base direction.
Thus the three-current combines two tangent directions along the sphere with
one direction transverse to the sphere and tangent to the corresponding
three-dimensional lamination.

The unstable current
\[
 \mathcal T^u
 =\frac12\,\Psi_\#\bigl([S^2]\times\mathcal R_{v^u}\bigr)
\]
is the current naturally associated with $\mathcal L^u$ in the sense of
Ruelle--Sullivan.  It is not the
integration current of one embedded three-manifold, nor is it a sum of currents
of individual periodic spheres.  Rather, it records the oriented two-dimensional
sphere directions together with the unstable one-dimensional direction and the
Lebesgue transverse measure of the Kronecker foliation.  In a local
trivialization of the sphere bundle, its action on a smooth three-form $\omega$
may be viewed schematically as
\[
 \mathcal T^u(\omega)
 =\int_{\mathcal P^\circ}
   \left(\int_{\Sigma_x}\iota_{\widetilde v^u}\omega\right)d\mu^u(x),
\]
where $\widetilde v^u$ is the lifted unstable direction and $\mu^u$ is the
transverse invariant measure.  The analogous description holds for
$\mathcal T^s$.

The relation with the iterated coordinate cycles follows from the same description.
The current $(F_A^n)_\#[C_i]$ is a sphere family over the rational closed
geodesic of slope $A^ne_i$ in the torus.  As $n\to\infty$, these rational
slopes converge projectively to the unstable eigenline and the normalized
sphere families converge to $\mathcal T^u$.  Backward iterates converge in the
same way to $\mathcal T^s$.  Thus the periodic spheres describe the fibrewise
nonhyperbolicity of the lift, while the measured lamination currents encode the
Anosov-type dynamics transverse to those fibres.

In particular, the hierarchy of geometric objects is
\[
 \begin{array}{c|c|c}
 \text{object}&\text{dimension}&\text{geometric meaning}\\ \hline
 \Sigma_x&2&\text{one conjugacy-sphere fibre}\\
 \Sigma_x\ \text{periodic}&2&\text{fibre over a periodic pillowcase point}\\
 (F_A^n)_\#[C_i]&3&\text{sphere family over a rational closed curve}\\
 \mathcal T^u&3&\text{sphere family over the unstable measured lamination}\\
 \mathcal T^s&3&\text{sphere family over the stable measured lamination}.
 \end{array}
\]

\begin{theorem}[Stable eigencurrent]
\label{thm:stable-current}
Let $v^s\in\mathbb R^2$ satisfy
\[
 Av^s=\mu_u^{-1}v^s.
\]
For $i=1,2$, write
\[
 \mu_u^{-n}A^{-n}e_i\longrightarrow d_i v^s.
\]
Then there exists a nonzero closed normal three-current of finite mass $\mathcal T^s$,
supported on \(\mathcal C\), such that
\[
 \mu_u^{-n}(F_A^{-n})_\#[C_i]
 \longrightarrow d_i\mathcal T^s
\]
weakly as de Rham currents.  It satisfies
\[
 (F_A)_\#\mathcal T^s=\mu_u^{-1}\mathcal T^s,
\]
and its homology class is the stable eigenvector \(v^s\), after the
corresponding normalization.
\end{theorem}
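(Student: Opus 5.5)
The plan is to run the proof of Theorem~\ref{thm:unstable-current} again, with $F_A$ replaced by $F_A^{-1}$. By Proposition~\ref{prop:diffeomorphism}, $F_A^{-1}=F_{\Phi^{-1}}$ is a Nielsen lift of $A^{-1}$. Its row exponent matrix is $A^{-1}$, since $\rhoexp$ is an anti-homomorphism. The matrix $A^{-1}\in\mathrm{SL}(2,\mathbb Z)$ is hyperbolic, with expanding eigenvalue $\mu_u$ (note $\mu_s=\mu_u^{-1}$ because $\det A=1$) and expanding eigenvector $v^s$, since $A^{-1}v^s=\mu_u v^s$.

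First I would parametrize the iterates exactly. Setting one generator equal to $1$ in the words of $\Phi^{-n}$ gives $F_A^{-n}(q,1)=\phi_{A^{-n}e_1}(q)$ and $F_A^{-n}(1,q)=\phi_{A^{-n}e_2}(q)$. By \eqref{eq:cycle-product-current} this yields
\[
 (F_A^{-n})_\#[C_i]=\tfrac12\Psi_\#\bigl([S^2]\times[\gamma_{A^{-n}e_i}]\bigr),
\]
so all these currents are supported on $\mathcal C$.

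Next I would apply Lemma~\ref{lem:rational-geodesics} with $v_n=A^{-n}e_i$ and $r_n=\mu_u^{n}$. The vectors $v_n$ are primitive because $A^{-n}$ is unimodular. The convergence $\mu_u^{-n}A^{-n}e_i\to d_iv^s$ is the assumed hypothesis. The direction $v^s$ is irrational, because an eigenvector of a hyperbolic integral matrix has a quadratic-irrational slope.

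One point in the lemma needs care. It divides an error of size $O_N(\|v_n\|^{1-N})$ by $r_n$, and it also uses $\|v_n\|\to\infty$. The latter holds if $d_i\neq0$. If $d_i=0$, I would not call on the lemma at all: $A^{-n}e_i$ would then lie along the contracting line, which is impossible for a nonzero integer vector. Even without that argument, the mass of $[\gamma_{v_n}]$ is $O(\|v_n\|)$, so $\mu_u^{-n}$ times it tends to $0=d_i\mathcal R_{v^s}$. This gives $\mu_u^{-n}[\gamma_{A^{-n}e_i}]\to d_i\mathcal R_{v^s}$.

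I then define
\[
 \mathcal T^s:=\tfrac12\Psi_\#\bigl([S^2]\times\mathcal R_{v^s}\bigr).
\]
Weak continuity of the product with $[S^2]$ and of push-forward by the smooth map $\Psi$ then gives $\mu_u^{-n}(F_A^{-n})_\#[C_i]\to d_i\mathcal T^s$. Closedness, normality and finite mass follow verbatim from the unstable case, as do the support in $\mathcal C$ and the Federer reference.

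For the eigen-relation I would use the intertwining identity $F_A\circ\Psi=\Psi\circ(\operatorname{id}_{S^2}\times A)$ (Theorem~\ref{thm:torusrestriction}) together with the linear identity $A_\#\mathcal R_w=\mathcal R_{Aw}$. This last identity holds because $A$ preserves Lebesgue measure on $\mathbb T^2$, having determinant $1$. Together these give $(F_A)_\#\mathcal T^s=\tfrac12\Psi_\#([S^2]\times\mathcal R_{Av^s})=\mu_u^{-1}\mathcal T^s$.

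For the homology class, the class of $(F_A^{-n})_\#[C_i]$ is $A^{-n}e_i$, because $(F_A)_*=A$ on $H_3$. Pairing the weak limit with closed $3$-forms gives $[\mathcal T^s]=v^s$ after normalization. Since $v^s\neq0$, this shows $\mathcal T^s\neq0$. Some $d_i$ is nonzero because $e_1,e_2$ span $\mathbb R^2$ and $\mu_u^{-n}A^{-n}$ is dominated by its expanding direction $v^s$.

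The main obstacle is purely one of bookkeeping: checking that the lemma's hypotheses hold for $A^{-1}$, including nondegeneracy ($d_i\neq0$ or the trivial mass bound), and that the transposition and anti-homomorphism conventions do not swap $v^s$ for another vector. Once that is settled, everything else transfers directly from Theorem~\ref{thm:unstable-current}.
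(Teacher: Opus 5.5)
Your proposal is correct and is essentially the paper's proof. The paper simply applies Theorem~\ref{thm:unstable-current} to the hyperbolic matrix $A^{-1}$ and its Nielsen lift $F_A^{-1}=F_{\Phi^{-1}}$, whose expanding eigenvalue is $\mu_u$ with eigenvector $v^s$; you re-run that argument explicitly, including the check that $d_i\neq0$. You might also note that when $\mu_u<0$ the proof of Lemma~\ref{lem:rational-geodesics} goes through unchanged for $r_n=\mu_u^n$ of alternating sign. The remaining difference is cosmetic: the paper obtains $(F_A^{-1})_\#\mathcal T^s=\mu_u\mathcal T^s$ and then applies $(F_A)_\#$, while you verify $(F_A)_\#\mathcal T^s=\mu_u^{-1}\mathcal T^s$ directly from $F_A\circ\Psi=\Psi\circ(\operatorname{id}_{S^2}\times A)$ and $A_\#\mathcal R_w=\mathcal R_{Aw}$.
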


\begin{proof}
Apply Theorem~\ref{thm:unstable-current} to the hyperbolic matrix
\(A^{-1}\) and to the inverse Nielsen diffeomorphism \(F_A^{-1}\).  The unstable eigenvalue of $A^{-1}$ on the stable eigendirection of $A$ is
$\mu_u$.  This gives the
stated convergence and
\[
 (F_A^{-1})_\#\mathcal T^s=\mu_u\mathcal T^s.
\]
Applying \((F_A)_\#\) to this identity yields
\((F_A)_\#\mathcal T^s=\mu_u^{-1}\mathcal T^s\).
\end{proof}

\begin{corollary}[Anosov-type dynamics in current space]
\label{cor:anosov-currents}
The lines \(\mathbb R\mathcal T^u\) and
\(\mathbb R\mathcal T^s\) are invariant under the push-forward action of
\(F_A\) on three-currents, and
\[
 (F_A^n)_\#\mathcal T^u=\mu_u^n\mathcal T^u,
 \qquad
 (F_A^n)_\#\mathcal T^s=\mu_u^{-n}\mathcal T^s.
\]
Equivalently, their projective classes are fixed.  Thus the lift is not
Anosov on \(S^3\times S^3\), but its stable and unstable eigen-currents
have the expected stable/unstable scaling: their magnitudes grow and decay
exponentially at rate $|\mu_u|$.
\end{corollary}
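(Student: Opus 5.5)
The corollary follows directly from the two eigencurrent theorems once I check that push-forward of currents is functorial. My plan is to start from the two identities
\[
 (F_A)_\#\mathcal T^u=\mu_u\mathcal T^u,
 \qquad
 (F_A)_\#\mathcal T^s=\mu_u^{-1}\mathcal T^s,
\]
supplied by Theorem~\ref{thm:unstable-current} and Theorem~\ref{thm:stable-current}, and iterate them.

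The first step is functoriality and linearity of push-forward. For a smooth map $G$ and a current $T$, the push-forward is defined by $G_\#T(\omega)=T(G^*\omega)$. Since $(G\circ H)^*=H^*\circ G^*$, it follows that $(G\circ H)_\#=G_\#\circ H_\#$. Each $G_\#$ is also linear, and the forms here are smooth and $F_A$ is a real-analytic diffeomorphism of a compact manifold, so every expression is well defined. Hence $(F_A^n)_\#=((F_A)_\#)^n$ for $n\ge 0$.

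For $n<0$, Proposition~\ref{prop:diffeomorphism} gives $F_A^{-1}=F_{\Phi^{-1}}$, so $(F_A^{-1})_\#=((F_A)_\#)^{-1}$. Applying this inverse to the eigen-identities gives
\[
 (F_A^{-1})_\#\mathcal T^u=\mu_u^{-1}\mathcal T^u,
 \qquad
 (F_A^{-1})_\#\mathcal T^s=\mu_u\mathcal T^s.
\]
Induction on $|n|$ then yields $(F_A^n)_\#\mathcal T^u=\mu_u^n\mathcal T^u$ and $(F_A^n)_\#\mathcal T^s=\mu_u^{-n}\mathcal T^s$ for all $n\in\Z$.

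Invariance of the lines is then immediate: by linearity,
\[
 (F_A)_\#(t\mathcal T^u)=t\mu_u\mathcal T^u\in\R\mathcal T^u ,
\]
and likewise for $\mathcal T^s$. Because $\mathcal T^u$ and $\mathcal T^s$ are nonzero (their homology classes are the nonzero eigenvectors $v^u,v^s$), the projective classes are well defined, and the scaling shows they are fixed.

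For the magnitude statement, I would measure size in the mass norm $\mathbf M$, which is finite by the theorems. By homogeneity, $\mathbf M((F_A^n)_\#\mathcal T^u)=|\mu_u|^n\,\mathbf M(\mathcal T^u)$ with $\mathbf M(\mathcal T^u)>0$, and similarly with exponent $-n$ for $\mathcal T^s$. The same growth can be read off in homology by pairing with closed forms, since $A^nv^u=\mu_u^nv^u$.

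The non-Anosov clause is Theorem~\ref{thm:notanosov}. There is no real obstacle anywhere in this argument. The only point needing care is that pushing forward by the inverse map is legitimate, and that is guaranteed because $F_A$ is a diffeomorphism.
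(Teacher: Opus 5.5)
Your proposal is correct and matches the paper's route: the paper gives no separate proof and treats the corollary as immediate from Theorems~\ref{thm:unstable-current} and~\ref{thm:stable-current}, obtained by iterating the eigen-identities through functoriality of push-forward, with the non-Anosov clause supplied by Theorem~\ref{thm:notanosov}. Your use of the mass norm to make ``magnitude'' precise, and your extension to negative $n$, are harmless additions consistent with that argument.
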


\begin{remark}
The term \emph{invariant current} should therefore be understood
projectively.  The normalized operators
\[
 \mu_u^{-1}(F_A)_\#
 \quad\hbox{and}\quad
 \mu_u(F_A)_\#
\]
fix \(\mathcal T^u\) and \(\mathcal T^s\), respectively; the unnormalized
push-forward expands or contracts them.
\end{remark}

\section{Concluding perspective}

The construction gives a precise quaternionic analogue of a cat map, but
not by interpreting matrix multiplication literally inside a
noncommutative algebra.  The appropriate passage is
\[
 \Z^2\quad\rightsquigarrow\quad F_2,
 \qquad
 \mathrm{GL}(2,\Z)\quad\rightsquigarrow\quad\Aut(F_2),
\]
and integer monomials are replaced by ordered noncommutative words.
Every hyperbolic toral automorphism admits real-analytic,
volume-preserving quaternionic lifts.  These lifts contain invariant
Anosov tori, have positive entropy and abundant periodic points, but their
compact conjugation symmetry forces periodic submanifolds and prevents
ambient Anosov hyperbolicity and structural stability.

The resulting systems lie naturally at the intersection of hyperbolic
dynamics, free-group automorphisms, compact Lie groups, character
varieties and geometric measure theory.  They are not Anosov on the
ambient six-manifold, but they retain three precise Anosov features: the
restriction to every common maximal torus is a hyperbolic toral
automorphism; the boundary of the character ball, consisting of conjugacy classes of
commuting pairs, carries the associated pillowcase pseudo-Anosov dynamics; and the stable and unstable eigen-currents are eigenvectors of push-forward
with the stable and unstable eigenvalues of $A$.  Their exact entropy, reduced periodic structure, current regularity and
the invertibility classification of power-product maps remain promising
directions for further work.
\section*{Acknowledgments}

The author acknowledges {\bf Proyecto PAPIIT IN103324
(DGAPA, UNAM, M\'exico)} for its financial support.

\medskip
\noindent\textbf{Use of Generative-AI tools declaration.}
The author used ChatGPT (OpenAI) and Claude (Anthropic) during the preparation
of this manuscript for proofreading, checking calculations and mathematical
arguments, and improving the clarity and exposition of the text.  All
AI-assisted suggestions were reviewed and, where appropriate, independently
verified by the author.  The author assumes full responsibility for the
mathematical content and for the final version of the manuscript.

\end{document}